\newtheorem{theorem}{Theorem}[section]
\newtheorem{prop}[theorem]{Proposition}
\newtheorem{remark}{Remark}[section]
\newtheorem{corollary}[theorem]{Corollary}
\newtheorem{definition}[theorem]{Definition}
\newenvironment{proof-sketch}{\noindent{\bf Sketch of Proof}\hspace*{1em}}{\qed\bigskip}
\newcommand{\RR}{\mathbb R}
\newcommand{\FF}{\mathbb F}
\newcommand{\NN}{\mathbb N}
\newcommand{\di}{\displaystyle}
\newcommand{\ep}{\varepsilon}
\newcommand{\ri}{\rightarrow}
\newcommand{\wt}{W^{1,\theta}(\Omega)}
\newcommand{\intom}{\int_\Omega}
\newcommand{\bb}{\begin{equation}}
\newcommand{\bbb}{\end{equation}}
\renewcommand{\leq}{\leqslant}
\renewcommand{\geq}{\geqslant}
\begin{document}
\title[Double-phase Robin problems]{Existence and multiplicity of solutions\\ for double-phase Robin problems}
\author[N.S. Papageorgiou]{N.S. Papageorgiou}
\address[N.S. Papageorgiou]{ Department of Mathematics,  National Technical University, 
				Zografou Campus, Athens 15780, Greece \& Institute of Mathematics, Physics and Mechanics, 1000 Ljubljana, Slovenia}
\email{\tt npapg@math.ntua.gr}
\author[V.D. R\u{a}dulescu]{V.D. R\u{a}dulescu}
\address[V.D. R\u{a}dulescu]{Faculty of Applied Mathematics, AGH University of Science and Technology, al. Mickiewicza 30, 30-059 Krak\'{o}w, Poland  \& Department of Mathematics, University of Craiova, 200585 Craiova, Romania \& Institute of Mathematics, Physics and Mechanics, 1000 Ljubljana, Slovenia}
\email{\tt radulescu@inf.ucv.ro}
\author[D.D. Repov\v{s}]{D.D. Repov\v{s}}
\address[D.D. Repov\v{s}]{Faculty of Education and Faculty of Mathematics and Physics, University of Ljubljana \& Institute of Mathematics, Physics and Mechanics, 1000 Ljubljana, Slovenia}
\email{\tt dusan.repovs@guest.arnes.si}
\keywords{Unbalanced growth, Musielak-Orlicz space, homological local linking, superlinear reaction, resonant reaction.\\
\phantom{aa} {\it 2010 Mathematics Subject Classification}. Primary:  35J20. Secondary: 35J25, 35J60.}
\begin{abstract}
We consider a double phase Robin problem with a Carath\'eodory nonlinearity. When the reaction is superlinear but without satisfying the Ambrosetti-Rabinowitz condition, we prove an existence theorem. When the reaction is resonant, we prove a multiplicity theorem. Our approach is Morse theoretic, using the notion of homological local linking.
\end{abstract}
\maketitle

\section{Introduction}
Let $\Omega\subseteq \RR^N$ be a bounded domain with Lipschitz boundary $\partial\Omega$. In this paper we study the
following two phase Robin problem
\begin{equation}\label{eq1}
	\left\{
		\begin{array}{lll}
	&\di -{\rm div}\, (a_0(z)|Du|^{p-2}Du)-\Delta_qu+\xi(z)|u|^{p-2}u=f(z,u)&\, \mbox{in}\ \Omega		\\
		&\di \frac{\partial u}{\partial n_\theta}+\beta(z)|u|^{p-2}u=0&\, \mbox{on}\ \partial\Omega,
		\end{array}
	\right\}
\end{equation}
where $1<q<p\leq N$.

In this problem, the weight $a_0:\overline\Omega\ri\RR$ is Lipschitz continuous and $a_0(z)>0$ for all $z\in\Omega$. The potential function $\xi\in L^\infty(\Omega)$ satisfies $\xi(z)\geq0$ for a.a. $z\in\Omega$, while the reaction term $f(z,x)$ is Carath\'eodory (that is, for all $x\in\RR$ the mapping $z\mapsto f(z,x)$ is measurable and for a.a. $z\in\Omega$ the function $x\mapsto f(z,x)$ is continuous).
Let $F(z,\cdot)$ be the primitive of $f(z,\cdot)$, that is, $F(z,x)=\int_0^xf(z,s)ds$. We assume that for a.a. $z\in\Omega$, $F(z,\cdot)$ is $q$-linear near the origin. On the other hand, near $\pm\infty$, we consider two distinct cases for $f(z,\cdot)$:\\
(i) for a.a. $z\in\Omega$, $f(z,\cdot)$ is $(p-1)$-superlinear but without satisfying the Ambrosetti-Rabinowitz condition (the AR-condition for short), which is common in the literature when dealing with superlinear problems;\\
(ii) for a.a. $z\in\Omega$, $f(z,\cdot)$ is $(p-1)$-linear and possibly resonant with respect to the principal eigenvalue of the weighted $p$-Laplacian
$$u\mapsto -{\rm div}\, (a_0(z)|Du|^{p-2}Du)$$
with Robin boundary condition.

In the boundary condition, $\frac{\partial u}{\partial n_\theta}$ denotes the conormal derivative of $u$ corresponding to the modular function $\theta(z,x)=a_0(z)x^p+x^q$
for all $z\in\Omega$, all $x\geq 0$. We interpret this derivative via the nonlinear Green identity (see Papageorgiou, R\u adulescu and Repov\v{s} \cite[p. 34]{18}) and
$$\frac{\partial u}{\partial n_\theta}=[a_0(z)|Du|^{p-2}+|Du|^{q-2}]\,\frac{\partial u}{\partial n}\ \mbox{for all}\ u\in C^1(\overline\Omega),$$
with $n(\cdot)$ being the outward unit normal on $\partial\Omega$. The boundary coefficient $\beta$ satisfies $\beta\in C^{0,\alpha}(\partial\Omega)$ with $0<\alpha<1$
and $\beta(z)\geq 0$ for all $z\in\partial\Omega$.

The differential operator in problem \eqref{eq1} is a weighted $(p,q)$-Laplace operator and it corresponds to the energy functional
$$u\mapsto \int_\Omega [a_0(z)|Du|^p+|Du|^q]dz.$$
Since we do not assume that the weight function $a_0(z)$ is bounded away from zero, the continuous integrand $\theta_0:\Omega\times\RR^N\ri\RR_+$ of this integral functional
exhibits unbalanced growth, namely
$$|y|^q\leq \theta_0(z,y)\leq c_0(1+|y|^p)\ \mbox{for a.a. $z\in \Omega$, all $y\in\RR^N$ and some $c_0>0$}.$$

Such integral functionals were first investigated by Marcellini \cite{14} and Zhikov \cite{22}, in connection with problems in nonlinear elasticity theory. Recently, Baroni, Colombo and Mingione   \cite{3} and Colombo and Mingione \cite{6,7} revived the interest in them and produced important local regularity results for the minimizers of such functionals. A global regularity theory for such problems remains elusive.

In this paper, using tools from Morse theory (in particular, critical groups), we prove an existence theorem (for the superlinear case) and a multiplicity theorem (for the linear resonant case). Existence and multiplicity results for two phase problems were proved recently by Cencelj, R\u adulescu and Repov\v{s} \cite{4} (problems with variable growth), Colasuonno and Squassina \cite{5} (eigenvalue problems), Liu and Dai \cite{12} (existence of solutions for problems with superlinear reaction), Papageorgiou, R\u adulescu and Repov\v s \cite{19} (multiple solutions for superlinear problems), and Papageorgiou, Vetro and Vetro \cite{20} (parametric Dirichlet problems). The approach in all the aforementioned works is different and the hypotheses on the reaction are more restrictive.

Finally, we mention that $(p,q)$-equations arise in many mathematical models of physical processes. We refer to the very recent works of Bahrouni, R\u adulescu and Repov\v s \cite{1,2} and the references therein.

\section{Mathematical background}
The study of two-phase problems requires the use of Musielak-Orlicz spaces. So, let $\theta:\Omega\times \RR_+\ri\RR_+$ be the modular function defined by
$$\theta (z,x)=a_0(z)x^p+x^q\ \mbox{for all}\ z\in \Omega,\ x\geq 0.$$
This is a generalized N-function (see Musielak \cite{16}) and it satisfies
$$\theta (z,2x)\leq 2^p\theta (z,x)\ \mbox{for all}\ z\in\Omega,\ x\geq 0,$$
that is, $\theta(z,\cdot)$ satisfies the ($\Delta_2$)-property (see Musielak \cite[p. 52]{16}). Using the modular function $\theta(z,x)$, we can define the Musielak-Orlicz space
$L^\theta(\Omega)$ as follows:
$$L^\theta(\Omega)=\left\{u:\Omega\ri\RR;\ \mbox{$u$  is measurable and}\ \int_\Omega\theta (z,|u|)dz<\infty\right\}.$$

This space is equipped with the so-called ``Luxemburg norm" defined by
$$\|u\|_\theta=\inf\left\{\lambda>0:\ \int_\Omega \theta (z,\frac{|u|}{\lambda})dz\leq 1\right\}.$$

Using $L^\theta(\Omega)$, we can define the following Sobolev-type space $W^{1,\theta}(\Omega)$, by setting
$$W^{1,\theta}(\Omega)=\{u\in L^\theta(\Omega):\ |Du|\in L^\theta(\Omega)\}.$$
We equip $W^{1,\theta}(\Omega)$ with the norm $\|\,\cdot\,\|$ defined by
$$\|u\|=\|u\|_\theta +\|Du\|_\theta ,$$
where $\|Du\|_\theta =\|\, |Du|\,\|_\theta$. The spaces $L^\theta (\Omega)$ and $W^{1,\theta}(\Omega)$ are separable and uniformly convex (hence reflexive) Banach spaces.

Let $\hat\theta(z,x)$ be another modular function. We say that ``$\hat\theta$ is weaker than $\theta$" and write $\hat\theta\prec\theta$, if there exist $c_1,\, c_2>0$ and a function $\eta\in L^1(\Omega)$ such that
$$\hat\theta (z,x)\leq c_1\,\theta (z,c_2x)+\eta(z)\ \mbox{for a.a.}\ z\in\Omega\ \mbox{and all}\ x\geq 0.$$
Then we have
$$L^\theta(\Omega)\hookrightarrow L^{\hat\theta}(\Omega)\ \mbox{and}\ W^{1,\theta}(\Omega)\hookrightarrow W^{1,\hat\theta}(\Omega)\ \mbox{continuously}.$$

Combining this fact with the classical Sobolev embedding theorem, we obtain the following embeddings; see Propositions 2.15 and 2.18 of Colasuonno and Squassina \cite{5}.

\begin{prop}\label{prop1} We assume that $1<q<p<\infty$. Then the following properties hold.

(a) If $q\not=N$, then $W^{1,\theta}(\Omega)\hookrightarrow L^r(\Omega)$ continuously for all $1\leq r\leq q^*$, where
$$q^*=\left\{ \begin{array}{lll} &\di\frac{Nq}{N-q}&\quad\mbox{if}\ q<N\\
&\di +\infty &\quad\mbox{if}\ q\geq N.\end{array}\right.
$$

(b) If $q=N$, then $W^{1,\theta}(\Omega)\hookrightarrow L^r(\Omega)$ continuously for all $1\leq r<\infty$.

(c) If $q\leq N$, then $W^{1,\theta}(\Omega)\hookrightarrow L^r(\Omega)$ compactly for all $1\leq r< q^*$.

(d) If $q> N$, then $W^{1,\theta}(\Omega)\hookrightarrow L^\infty(\Omega)$ compactly.

(e)  $W^{1,\theta}(\Omega)\hookrightarrow W^{1,q}(\Omega)$ continuously.
\end{prop}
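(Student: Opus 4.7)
The plan is to reduce everything to the classical Sobolev embedding for $W^{1,q}(\Omega)$ via part (e), which in turn follows from the pointwise inequality $x^q\leq\theta(z,x)$.

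First I would prove (e). Set $\hat\theta(z,x)=x^q$. Since $a_0(z)\geq 0$, for every $z\in\Omega$ and every $x\geq 0$ we have
$$\hat\theta(z,x)=x^q\leq a_0(z)x^p+x^q=\theta(z,x),$$
so $\hat\theta\prec\theta$ with $c_1=c_2=1$ and $\eta\equiv 0$ in the definition recalled above Proposition \ref{prop1}. By the general principle stated just before Proposition \ref{prop1}, this yields the continuous embeddings
$$L^\theta(\Omega)\hookrightarrow L^q(\Omega)\quad\text{and}\quad W^{1,\theta}(\Omega)\hookrightarrow W^{1,q}(\Omega).$$
This proves part (e). (One may also verify this directly from the Luxemburg norm: if $\|u\|_\theta\leq\lambda$ then $\int_\Omega(|u|/\lambda)^q\,dz\leq\int_\Omega\theta(z,|u|/\lambda)\,dz\leq 1$, so $\|u\|_{L^q(\Omega)}\leq\lambda$; taking the infimum gives $\|u\|_{L^q(\Omega)}\leq\|u\|_\theta$.)

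Next I would obtain parts (a)--(d) by composing the embedding $W^{1,\theta}(\Omega)\hookrightarrow W^{1,q}(\Omega)$ from (e) with the classical Sobolev (and Rellich--Kondrachov, and Morrey) embedding theorems applied to $W^{1,q}(\Omega)$ on the bounded Lipschitz domain $\Omega$. More precisely: when $q<N$, $W^{1,q}(\Omega)\hookrightarrow L^r(\Omega)$ is continuous for $1\leq r\leq q^*=Nq/(N-q)$ and compact for $1\leq r<q^*$, giving (a) and (c) in that range; when $q=N$, $W^{1,q}(\Omega)\hookrightarrow L^r(\Omega)$ is continuous for all $1\leq r<\infty$ (and compact), giving (b) and the remaining part of (c); when $q>N$, Morrey's theorem yields the compact embedding $W^{1,q}(\Omega)\hookrightarrow L^\infty(\Omega)$ (actually into a H\"older space), giving (d) and also covering (a) with $q^*=+\infty$.

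There is no substantive obstacle: the proof is essentially a bookkeeping argument combining the trivial lower bound $\theta(z,x)\geq x^q$ with standard Sobolev theory. The only point requiring a sentence of care is the passage from the modular inequality $\hat\theta\leq\theta$ to the continuous embedding of the Musielak--Orlicz (and associated Sobolev) spaces, but this is precisely the general fact invoked from Colasuonno--Squassina \cite{5} just before the statement, and it is what makes the reduction to the constant-exponent case $W^{1,q}(\Omega)$ legitimate.
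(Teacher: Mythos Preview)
Your proposal is correct and is precisely the argument the paper indicates: the paper does not give a detailed proof but just before the statement says one combines the ``$\hat\theta\prec\theta$'' principle with the classical Sobolev embedding theorem, citing Colasuonno--Squassina \cite{5}. Your reduction via $x^q\leq\theta(z,x)$ to get $W^{1,\theta}(\Omega)\hookrightarrow W^{1,q}(\Omega)$ and then invoking the standard Sobolev/Rellich--Kondrachov/Morrey embeddings is exactly that route.
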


We have
$$L^p(\Omega)\hookrightarrow L^\theta(\Omega)\hookrightarrow L^p_{a_0}(\Omega)\cap L^q(\Omega)$$
with both embeddings being continuous.

We consider the modular function
$$\rho_\theta (u)=\int_\Omega \theta (z,|Du|)dz=\int_\Omega [a_0(z)|Du|^p+|Du|^q]dz\ \mbox{for all}\ u\in W^{1,\theta}(\Omega).$$

There is a close relationship between the norm $\|\,\cdot\,\|$ of $W^{1,\theta}(\Omega)$ and the modular functional $\rho_\theta(\cdot)$; see Proposition 2.1 of Liu and Dai \cite{12}.

\begin{prop}\label{prop2}
(a) If $u\not=0$, then $\|Du\|_\theta =\lambda$ if and only if $\rho_\theta (\frac u\lambda)\leq 1$.

(b) $\|Du\|_\theta <1$ (resp. $=1,\, >1$) if and only if $\rho_\theta (u)<1$ (resp. $=1,\, >1$).

(c) If $\|Du\|_\theta <1$, then $\|Du\|_\theta^p\leq \rho_\theta (u)\leq \|Du\|_\theta^q$.

(d)  If $\|Du\|_\theta >1$, then $\|Du\|_\theta^q\leq \rho_\theta (u)\leq \|Du\|_\theta^p$.

(e) $\|Du\|_\theta\ri 0$ if and only if $\rho_\theta(u)\ri 0$.

(f) $\|Du\|_\theta\ri +\infty$ if and only if $\rho_\theta(u)\ri +\infty$.
\end{prop}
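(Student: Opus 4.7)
The whole proposition rests on the fact that $\theta(z,\cdot)$ fails to be positively homogeneous but is controlled by two homogeneous functions of degree $p$ and $q$. Concretely, since $\theta(z,tx)=a_0(z)t^px^p+t^qx^q$, a case split on whether $t\geq 1$ or $t\leq 1$ yields the scaling estimates
\[
t\geq 1:\quad t^q\theta(z,x)\leq \theta(z,tx)\leq t^p\theta(z,x),\qquad 0<t\leq 1:\quad t^p\theta(z,x)\leq \theta(z,tx)\leq t^q\theta(z,x).
\]
These four inequalities are essentially the engine of the whole proof. My plan is to establish them once and then derive the six items.

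For (a), I would fix $u\neq 0$ and consider the auxiliary function $\varphi(\lambda)=\rho_\theta(u/\lambda)$. Using the scaling estimates above with $x=|Du(z)|$ and $t=\mu/\lambda$, one sees that $\varphi$ is continuous and strictly decreasing on $(0,\infty)$; moreover $\varphi(\lambda)\to 0$ as $\lambda\to\infty$ (dominated convergence, together with the $(\Delta_2)$-property to control the integrand) and $\varphi(\lambda)\to\infty$ as $\lambda\to 0^+$. Hence the infimum in the definition of $\|Du\|_\theta$ is attained and satisfies $\rho_\theta(u/\|Du\|_\theta)\leq 1$ (in fact equals $1$), which yields (a).

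Parts (c) and (d) then follow immediately by applying the scaling estimates to $t=\|Du\|_\theta$ and $x=|Du(z)|/\|Du\|_\theta$, and integrating. For instance, if $\|Du\|_\theta>1$, writing $\lambda=\|Du\|_\theta$ and using $\theta(z,|Du|)=\theta(z,\lambda\cdot|Du|/\lambda)$ with $\lambda>1$ gives pointwise $\lambda^q\theta(z,|Du|/\lambda)\leq \theta(z,|Du|)\leq\lambda^p\theta(z,|Du|/\lambda)$; integrating and using (a) yields $\|Du\|_\theta^q\leq \rho_\theta(u)\leq\|Du\|_\theta^p$. The case $\|Du\|_\theta<1$ is symmetric. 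From (c) and (d), item (b) is an immediate consequence: in each of the three regimes ($\|Du\|_\theta<1$, $=1$, $>1$), the modular is bounded by powers of $\|Du\|_\theta$ that place $\rho_\theta(u)$ in the corresponding regime, and the reverse implications follow by the strict monotonicity of $\varphi$. Finally, (e) and (f) drop out from (c) and (d) respectively: if $\rho_\theta(u_n)\to 0$ then eventually $\rho_\theta(u_n)<1$, so by (b) $\|Du_n\|_\theta<1$, and then by (c) $\|Du_n\|_\theta\leq \rho_\theta(u_n)^{1/p}\to 0$; the converse and the $+\infty$ case are analogous.

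The only nontrivial step is the attainment and continuity argument for $\varphi$ in (a), where one must invoke the $(\Delta_2)$-property (already noted in the excerpt) to justify passing the limit under the integral, since $\theta(z,\cdot)$ is not homogeneous. Everything else is a bookkeeping exercise with the two scaling inequalities; I expect no further obstacles.
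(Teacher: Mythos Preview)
The paper does not actually prove this proposition; it simply records the statement and refers the reader to Proposition~2.1 of Liu and Dai~\cite{12}. Your proposal supplies a correct direct proof, and it is the standard one for double-phase modulars: the two-sided scaling inequalities
\[
t\ge1:\ t^{q}\theta(z,x)\le\theta(z,tx)\le t^{p}\theta(z,x),\qquad 0<t\le1:\ t^{p}\theta(z,x)\le\theta(z,tx)\le t^{q}\theta(z,x),
\]
are precisely the mechanism behind all six items, and your chain of deductions (a)$\Rightarrow$(c),(d)$\Rightarrow$(b)$\Rightarrow$(e),(f) is the usual route. The only caveat is that, as you implicitly noted, the hypothesis in (a) should really be $Du\not\equiv0$ (so that the auxiliary function $\varphi(\lambda)=\rho_\theta(u/\lambda)$ is strictly decreasing and unbounded near $0$) and the conclusion should read $\rho_\theta(u/\lambda)=1$ rather than $\le1$; this is a minor inaccuracy in the paper's statement, not in your argument.
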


On $\partial\Omega$ we consider the $(N-1)$-dimensional Hausdorff (surface) measure $\sigma (\cdot)$. Using this measure, we can define in the usual way the ``boundary" Lebesgue spaces $L^s(\partial\Omega)$ for $1\leq s\leq \infty$. It is well-known that there exists a unique continuous linear map $\gamma_0:W^{1,q}(\Omega)\ri L^q(\partial\Omega)$, known as the ``trace map", such that
$$\gamma_0(u)=u|_{\partial\Omega}\ \mbox{for all}\ u\in W^{1,q}(\Omega)\cap C(\overline\Omega).$$
We have
$${\rm im}\, \gamma_0=W^{\frac{1}{q'},q}(\Omega)\ \left(\frac 1q+\frac{1}{q'}=1\right)\ \mbox{and}\ {\rm ker}\, \gamma_0=W_0^{1,q}(\Omega).$$
Moreover, the trace map $\gamma_0(\cdot)$ is compact into $L^s(\partial\Omega)$ for all $1\leq s<(N-1)q/(N-q)$ if $q<N$, and for all $1\leq s<\infty$ if $q\geq N$. In what follows, for the sake of notational simplicity, we drop the use of the trace map $\gamma_0(\cdot)$. All restrictions of the Sobolev functions on the boundary $\partial\Omega$ are understood in the sense of traces.

Let $\langle\,\cdot,\cdot\,\rangle$ denote the duality brackets for the pair $(W^{1,\theta}(\Omega),W^{1,\theta}(\Omega)^*)$ and $\langle\,\cdot,\cdot\,\rangle_{1,q}$
denote the duality brackets for the pair $(W^{1,q}(\Omega),W^{1,q}(\Omega)^*)$. We introduce the maps $A_p^{a_0}:W^{1,\theta}(\Omega)\ri W^{1,\theta}(\Omega)^*$ and
$A_q:W^{1,q}(\Omega)\ri W^{1,q}(\Omega)^*$ defined by
$$\langle A_p^{a_0}(u),h\rangle =\int_\Omega a_0(z)|Du|^{p-2}(Du,Dh)_{\RR^N}dz\ \mbox{for all}\ u,h\in W^{1,\theta}(\Omega),$$
$$\langle A_q(u),h\rangle_{1,q} =\int_\Omega |Du|^{q-2}(Du,Dh)_{\RR^N}dz\ \mbox{for all}\ u,h\in W^{1,q}(\Omega).$$
We have
$$\langle A_q(u),h\rangle_{1,q} =\langle A_q(u),h\rangle\ \mbox{for all}\ u,h\in W^{1,\theta}(\Omega).$$

We introduce the following hypotheses on the weight $a_0(\cdot)$ and on the coefficients $\xi(\cdot)$ and $\beta(\cdot)$.

\smallskip
$H_0$: $a_0:\overline\Omega\ri\RR$ is Lipschitz continuous, $a_0(z)>0$ for all $z\in\Omega$, $\xi\in L^\infty(\Omega)$, $\xi(z)\geq 0$ for a.a. $z\in\Omega$, $\beta\in C^{0,\alpha}(\partial\Omega)$ with $0<\alpha<1$, $\xi\not\equiv0$ or $\beta\not\equiv0$ and $q>Np/(N+p-1)$.

\begin{remark}\label{rem1} The latter condition on the exponent $q$ implies that $W^{1,\theta}(\Omega)\hookrightarrow L^p(\partial\Omega)$ compactly and $q<p^*$.
\end{remark}

We introduce the $C^1$-functional $\gamma_p:W^{1,\theta}(\Omega)\ri\RR$ defined by
$$\gamma_p(u)=\int_\Omega a_0(z)|Du|^pdz+\int_\Omega \xi(z)|u|^pdz+\int_{\partial\Omega}\beta(z)|u|^pd\sigma\ \mbox{for all}\ u\in W^{1,\theta}(\Omega).$$

Then hypotheses $H_0$, Lemma 4.11 of Mugnai and Papageorgiou \cite{15}, and Proposition 2.4 of Gasinski and Papageorgiou \cite{10}, imply that
\begin{equation}\label{eq2}c_1\,\|u\|^p\leq\gamma_p(u)\ \mbox{for some $c_1>0$, all $u\in W^{1,\theta}(\Omega)$}.\end{equation}

We denote by $\hat\lambda_1(p)$ the first (principal) eigenvalue of the following nonlinear eigenvalue problem
\begin{equation}\label{eq3}
	\left\{
		\begin{array}{lll}
	&\di -{\rm div}\, (a_0(z)|Du|^{p-2}Du)+\xi(z)|u|^{p-2}u=\hat\lambda |u|^{p-2}u&\, \mbox{in}\ \Omega		\\
		&\di \frac{\partial u}{\partial n_p}+\beta(z)|u|^{p-2}u=0&\, \mbox{on}\ \partial\Omega.
		\end{array}
	\right\}
\end{equation}

Here, $\frac{\partial u}{\partial n_p}=|Du|^{p-2}\frac{\partial u}{\partial n}$. The eigenvalue $\hat\lambda_1(p)$ has the following variational characterization
\begin{equation}\label{eq4}\hat\lambda_1(p)=\inf\left\{\frac{\gamma_p(u)}{\|u\|^p_p}:\ u\in W^{1,p}(\Omega)\setminus\{0\} \right\}\ \mbox{(see \cite{17})}.\end{equation}

Then by \eqref{eq2}, we see that $\hat\lambda_1(p)>0$. This eigenvalue is simple (that is, if $\hat u,\, \hat v$ are corresponding eigenfunctions, then $\hat u=\eta\hat v$ with $\eta\in\RR\setminus\{0\}$) and isolated (that is, if $\hat\sigma (p)$ denotes the spectrum of \eqref{eq3}, then we can find $\ep>0$ such that $(\hat\lambda_1(p),\hat\lambda_1(p)+\ep)\cap \hat\sigma (p)=\emptyset$). The infimum in \eqref{eq4} is realized on the corresponding one-dimensional eigenspace, the elements of which have fixed sign. We denote by $\hat u_1(p)$ the corresponding positive, $L^p$-normalized (that is, $\|\hat u_1(p)\|_p=1$) eigenfunction. We know that $\hat u_1(p)\in L^\infty(\Omega)$ (see Colasuonno and Squassina \cite[Section 3.2]{5}) and $\hat u_1(p)(z)>0$ for a.a. $z\in\Omega$ (see Papageorgiou, Vetro and Vetro \cite[Proposition 4]{19}).

We will also use the spectrum of the following nonlinear eigenvalue problem
$$-\Delta_qu=\hat\lambda |u|^{q-2}u\ \mbox{in}\ \Omega,\quad \frac{\partial u}{\partial n}=0\ \mbox{on}\ \partial\Omega.$$

It is well known that this problem  has a sequence of variational eigenvalues $\{\hat\lambda_k(q)\}_{k\geq 1}$ such that
$\hat\lambda_k(q)\ri +\infty$ as $k\ri\infty$. We have $\hat\lambda_1(q)=0<\hat\lambda_2(q)$ (see Gasinski and Papageorgiou \cite[Section 6.2]{9}).

Let $X$ be a Banach space and $\phi\in C^1(X,\RR)$. We denote by $K_\phi$ the critical set of $\phi$, that is,
$$K_\phi=\{u\in X:\ \phi'(u)=0\}.$$
Also, if $\eta\in\RR$, then we set
$$\phi^\eta=\{u\in X:\ \phi(u)\leq\eta\}.$$

Consider a topological pair $(A,B)$ such that $B\subseteq A\subseteq X$. Then for every $k\in \NN_0$, we denote by $H_k(A,B)$ the $k$th-singular homology group for the pair $(A,B)$ with coefficients in a field $\FF$ of characteristic zero (for example, $\FF=\RR$). Then each $H_k(A,B)$ is an $\FF$-vector space and we denote by ${\rm dim}\, H_k(A,B)$ its dimension. We also recall that the homeomorphisms induced by maps of pairs and the boundary homomorphism $\partial$, are all $\FF$-linear.

Suppose that $u\in K_\phi$ is isolated. Then for every $k\in\NN_0$, we define the ``$k$-critical group" of $\phi$ at $u$ by
$$C_k(\phi, u)=H_k(\phi^c\cap U, \phi^c\cap U\setminus\{u\}),$$
where $U$ is an isolating neighborhood of $u$, that is, $K_\phi\cap U\cap\phi^c=\{u\}$. The excision property of singular homology implies that this definition is independent of the choice of the isolating neighborhood $U$.

We say that $\phi$ satisfies the ``C-condition" if it has the following property:
\begin{center}{``Every sequence $\{u_n\}_{n\geq 1}\subseteq X$ such that $\{\phi(u_n)\}_{n\geq 1}\subseteq\RR$ is bounded and $(1+\|u_n\|)\phi'(u_n)\ri 0$ in $X^*$ as $n\ri\infty$, has a strongly convergent subsequence". }\end{center}

Suppose that $\phi\in C^1(X,\RR)$ satisfies the C-condition and that $\inf \phi(K_\phi)>-\infty$. Let $c<\inf \phi(K_\phi)$. Then the critical groups of $\phi$ at infinity are defined by
$$C_k(\phi,\infty)=H_k(X,\phi^c)\ \mbox{for all}\ k\in\NN_0.$$
On account of the second deformation theorem (see Papageorgiou, R\u adulescu and Repov\v s \cite[p. 386]{18}, Theorem 5.3.12) this definition is independent of the choice of the level $c<\inf \phi(K_\phi)$.

Our approach is based on the notion of local $(m,n)$-linking ($m,n\in\NN$), see Papageorgiou, R\u adulescu and Repov\v s \cite[Definition 6.6.13, p. 534]{18}.

\begin{definition}\label{defi3} Let $X$ be a Banach space, $\phi\in C^1(X,\RR)$, and $0$ an isolated critical point of $\phi$ with $\phi(0)=0$. Let $m,n\in\NN$. We say that $\phi$ has a ``local $(m,n)$-linking" near the origin if there exist a neighborhood $U$ of $0$ and nonempty sets $E_0,\, E\subseteq U$, and $D\subseteq X$ such that $0\not\in E_0\subseteq E$, $E_0\cap D=\emptyset$ and
\\
(a) $0$ is the only critical point of $\phi$ in $\phi^0\cap U$;\\
(b) ${\rm dim}\, {\rm im}\, i_*-{\rm dim}\, {\rm im}\, j_*\geq n$, where
$$i_*:H_{m-1}(E_0)\ri H_{m-1}(X\setminus D)\ \mbox{and}\ j_*:H_{m-1}(E_0)\ri H_{m-1}(E)$$
are the homomorphisms induced by the inclusion maps
$i:E_0\ri X\setminus D$ and $j:E_0\ri E$;\\
(c) $\phi|_E\leq 0<\phi|_{U\cap D\setminus \{0\}}.$
\end{definition}

\begin{remark}\label{rem2} The notion of ``local $(m,n)$-linking" was introduced by Perera \cite{21} as a generalization of the concept of local linking due to Liu \cite{11}. Here we introduce a slightly more general version of this notion.\end{remark}

\section{Superlinear case}
In this section we treat the superlinear case, that is, we assume that the reaction $f(z,\cdot)$ exhibits $(p-1)$-superlinear growth near $\pm\infty$.

The hypotheses on $f(z,x)$ are the following.

\smallskip
$H_1$: $f:\Omega\times\RR\ri\RR$ is a Carath\'eodory function such that $f(z,0)=0$ for a.a. $z\in\Omega$ and\\
(i) $|f(z,x)|\leq \hat a(z)(1+|x|^{r-1})$ for a.a. $z\in\Omega$ and all $x\in\Omega$, with $\hat a\in L^\infty(\Omega)$, $p<r<q^*$;\\
(ii) if $F(z,x)=\int_0^x f(z,s)ds$, then $\di \lim_{x\ri\pm\infty}\frac{F(z,x)}{|x|^p}=+\infty$ uniformly for a.a. $z\in\Omega$;\\
(iii) if $\eta(z,x)=f(z,x)x-pF(z,x)$, then there exists $e\in L^1(\Omega)$ such that
$$\eta(z,x)\leq \eta(z,y)+e(z)\ \mbox{for a.a. $z\in\Omega$ and all $0\leq x\leq y$ or $y\leq x\leq 0$;}$$
(iv) there exist $\delta>0$, $\theta\in L^\infty(\Omega)$ and $\hat\lambda>0$ such that
$$0\leq\theta(z)\ \mbox{for a.a. $z\in\Omega$, $\theta\not\equiv 0$, $\hat\lambda\leq\hat\lambda_2(q)$},$$
$$\theta(z)|x|^q\leq qF(z,x)\leq\hat\lambda |x|^q\ \mbox{for a.a. $z\in\Omega$ and all $|x|\leq\delta$}.$$

\begin{remark}\label{rem3} Evidently, hypotheses $H_1(ii),\, (iii)$ imply that for a.a. $z\in\Omega$, the function $f(z,\cdot)$ is superlinear. However, to express this superlinearity, we do not invoke the usual AR-condition. We recall that the AR-condition says that there exist $\tau>p$ and $M>0$ such that
\begin{equation}\label{eq5a}0<\tau F(z,x)\leq f(z,x)x\ \mbox{for a.a. $z\in\Omega$ and all $|x|\geq M$; and}\end{equation}
\begin{equation}\label{eq5b}0<{\rm essinf}_\Omega\, F(\cdot, \pm M). \end{equation}
Integrating \eqref{eq5a} and using \eqref{eq5b}, we obtain a weaker condition, namely
$$\begin{array}{lll}
& c_2|x|^\tau\leq F(z,x) &\quad \mbox{for a.a. $z\in\Omega$, all $|x|\geq M$ and some $c_2>0$,}\\
\Rightarrow & c_3|x|^\tau\leq f(z,x)x &\quad \mbox{for a.a. $z\in\Omega$, all $|x|\geq M$ and with $c_3=\tau c_2>0$.}
\end{array}$$

Therefore the AR-condition implies that, eventually, $f(z,\cdot)$ has at least $(\tau-1)$-polynomial growth.
\end{remark}

In the present work, instead of the AR-condition, we use the quasimonotonicity hypothesis $H_1(iii)$, which is less restrictive and incorporates in our framework also $(p-1)$-superlinear nonlinearities with slower growth near $\pm\infty$ (see the examples below). Hypothesis $H_1(iii)$ is a slight generalization of a condition which can be found in Li and Yang \cite{13}. There are very natural ways to verify the quasimonotonicity condition. So, if there exists $M>0$ such that for a.a. $z\in\Omega$, either the function
$$x\mapsto \frac{f(z,x)}{|x|^{q-2}x}\ \mbox{is increasing on $x\geq M$ and decreasing on $x\leq-M$}$$
or the mapping
$$x\mapsto \eta(z,x)\ \mbox{is increasing on $x\geq M$ and decreasing on $x\leq-M$,}$$
then hypothesis $H_1(iii)$ holds.

Hypothesis $H_1(iv)$ implies that for a.a. $z\in\Omega$, the primitive $F(z,\cdot)$ is $q$-linear near 0.

\smallskip
{\it Examples.} The following functions satisfy hypotheses $H_1$. For the sake of simplicity we drop the $z$-dependence:
$$\begin{array}{ll}
&\di f_1(x)=\left\{\begin{array}{lll}
&\di \mu |x|^{q-2}x&\ \mbox{if}\ |x|\leq 1\\
&\di \mu |x|^{r-2}x&\ \mbox{if}\ |x|> 1\ \ \mbox{(with $0<\mu\leq\hat\lambda_2(q)$ and $p<r<q^*$)}\end{array}\right.\\
&\ \\
&\di f_2(x)=\left\{\begin{array}{lll}
&\di \mu |x|^{q-2}x&\ \mbox{if}\ |x|\leq 1\\
&\di \mu |x|^{p-2}x\ln x+\mu |x|^{\tau-2}x&\ \mbox{if}\ |x|> 1\ \ \mbox{(with $0<\mu\leq\hat\lambda_2(q)$ and $1<\tau<p$).}\end{array}\right.\\
\end{array}
$$

Note that only $f_1$ satisfies the AR-condition, whereas the function $f_2$ does not satisfy this growth condition.

The energy functional for problem \eqref{eq1} is the $C^1$-functional $\varphi:W^{1,\theta}(\Omega)\ri\RR$ defined by
$$\varphi (u)=\frac 1p\,\gamma_p(u)+\frac 1q\, \|Du\|^q_q-\int_\Omega F(z,u)dz\ \mbox{for all}\ u\in W^{1,\theta}(\Omega).$$

Next, we show that $\varphi (\cdot)$ satisfies the C-condition.

\begin{prop}\label{prop4}
If hypotheses $H_0,\, H_1$ hold, then the functional $\varphi(\cdot)$ satisfies the C-condition.
\end{prop}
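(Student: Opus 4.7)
The plan is to prove that every Cerami sequence $\{u_n\}\subseteq W^{1,\theta}(\Omega)$ of $\varphi$ is bounded and then to extract a strongly convergent subsequence using the $(S_+)$-property of the double-phase operator. Testing $(1+\|u_n\|)\varphi'(u_n)\to 0$ against $h=u_n$ produces
\[
\Bigl|\gamma_p(u_n)+\|Du_n\|_q^q-\int_\Omega f(z,u_n)u_n\,dz\Bigr|\le \ep_n,
\]
and subtracting this from the bound on $p\varphi(u_n)$ yields
\[
\Bigl(\frac{p}{q}-1\Bigr)\|Du_n\|_q^q+\int_\Omega \eta(z,u_n)\,dz\le M
\]
for some $M>0$ and all $n$. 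I argue boundedness by contradiction: assume $\|u_n\|\to+\infty$ and set $y_n=u_n/\|u_n\|$, so along a subsequence $y_n\rightharpoonup y$ in $W^{1,\theta}(\Omega)$, $y_n\to y$ in $L^r(\Omega)$ for all $1\le r<q^*$ by Proposition~\ref{prop1}(c), and $y_n\to y$ in $L^p(\partial\Omega)$ by Remark~\ref{rem1}.

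If $y\not\equiv 0$, then on $\{y\ne 0\}$ one has $|u_n|\to\infty$, so $H_1(ii)$ and Fatou give $\int_\Omega F(z,u_n)\,dz/\|u_n\|^p\to+\infty$; since $\gamma_p(u_n)/\|u_n\|^p=\gamma_p(y_n)$ is bounded and $\|Du_n\|_q^q/\|u_n\|^p\to 0$ (from $q<p$ and the continuous embedding in Proposition~\ref{prop1}(e)), we get $\varphi(u_n)/\|u_n\|^p\to-\infty$, a contradiction. Hence $y\equiv 0$. Now choose $s_n\in[0,1]$ with $\varphi(s_nu_n)=\max_{s\in[0,1]}\varphi(su_n)$. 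For arbitrary $R>0$, the vector $(2R)^{1/p}y_n$ satisfies $\gamma_p((2R)^{1/p}y_n)=2R\gamma_p(y_n)\ge 2Rc_1$ by \eqref{eq2}, while the $r$-growth of $F$ combined with $y_n\to 0$ in $L^r(\Omega)$ gives $\int_\Omega F(z,(2R)^{1/p}y_n)\,dz\to 0$. Thus eventually $\varphi(s_nu_n)\ge\varphi((2R)^{1/p}y_n)\ge 2Rc_1/p-o(1)$, and letting $R\to\infty$ we may assume $\varphi(s_nu_n)\to+\infty$. Since $\varphi(0)=0$ and $\varphi(u_n)$ is bounded, $s_n\in(0,1)$ for large $n$, so $\langle\varphi'(s_nu_n),s_nu_n\rangle=0$ and the same algebra as above gives
\[
p\varphi(s_nu_n)=\int_\Omega \eta(z,s_nu_n)\,dz+\Bigl(\frac{p}{q}-1\Bigr)s_n^q\|Du_n\|_q^q.
\]
Since $s_nu_n$ lies between $0$ and $u_n$ pointwise, $H_1(iii)$ gives $\int_\Omega\eta(z,s_nu_n)\,dz\le\int_\Omega\eta(z,u_n)\,dz+\|e\|_1$, whence
\[
p\varphi(s_nu_n)\le M+\|e\|_1+\Bigl(\frac{p}{q}-1\Bigr)(s_n^q-1)\|Du_n\|_q^q\le M+\|e\|_1,
\]
contradicting $\varphi(s_nu_n)\to+\infty$. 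So $\{u_n\}$ is bounded.

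Passing to a subsequence, $u_n\rightharpoonup u$ in $W^{1,\theta}(\Omega)$ with $u_n\to u$ in $L^r(\Omega)$ for $r<q^*$ and in $L^p(\partial\Omega)$. Evaluating $\langle\varphi'(u_n),u_n-u\rangle\to 0$ and noting that the terms involving $\xi$, $\beta$ and $f$ vanish in the limit (by compactness together with the $r$-growth of $f$ and the $p$-growth in the boundary and potential terms), I am left with $\langle A_p^{a_0}(u_n)+A_q(u_n),u_n-u\rangle\to 0$. The $(S_+)$-property of the double-phase operator $A_p^{a_0}+A_q$ on $W^{1,\theta}(\Omega)$, established by Liu and Dai~\cite{12}, then delivers $u_n\to u$ strongly. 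The main obstacle is the sub-case $y\equiv 0$: absence of the Ambrosetti--Rabinowitz condition makes the direct test $h=u_n$ insufficient, and the crux of the proof is the interior maximization combined with the quasimonotonicity $H_1(iii)$, carried out in a way that absorbs the mixed $(p,q)$-scaling produced by $\|Du_n\|_q^q$.
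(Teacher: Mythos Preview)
Your argument is correct and follows the same overall strategy as the paper's proof: show that $\int_\Omega\eta(z,u_n)\,dz$ is bounded, then argue boundedness of $\{u_n\}$ by contradiction via the Jeanjean-type interior maximization trick combined with the quasimonotonicity hypothesis $H_1(iii)$, and finally pass to a strongly convergent subsequence. There are two bookkeeping differences worth noting. First, in the $y\equiv 0$ case the paper introduces the auxiliary functional $\psi(u)=\tfrac{1}{p}\gamma_p(u)-\int_\Omega F(z,u)\,dz$ (dropping the $q$-gradient term) and runs the maximization on $\psi$; since $\psi\le\varphi$, this keeps the algebra purely $p$-homogeneous and avoids carrying $\|Du_n\|_q^q$ through. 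You instead maximize $\varphi$ itself and absorb the mixed term via the identity $p\varphi(s_nu_n)=\int_\Omega\eta(z,s_nu_n)\,dz+(\tfrac{p}{q}-1)s_n^q\|Du_n\|_q^q$ together with the observation that $(s_n^q-1)\|Du_n\|_q^q\le 0$; this is a perfectly valid (and arguably more transparent) way to handle the double-phase structure. Second, for the passage from weak to strong convergence the paper argues via the Kadec--Klee property of $L^p_{a_0}(\Omega,\RR^N)$, whereas you invoke the $(S_+)$-property of the operator $A_p^{a_0}+A_q$ from Liu and Dai~\cite{12}; both routes are standard and yield the same conclusion.
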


\begin{proof}
We consider a sequence $\{u_n\}_{n\geq 1}\subseteq W^{1,\theta}(\Omega)$ such that
\begin{equation}\label{eq6} |\varphi (u_n)|\leq c_4\ \mbox{for some $c_4>0$ and all $n\in\NN$},\end{equation}
\begin{equation}\label{eq7} (1+\|u_n\|)\varphi'(u_n)\ri 0\ \mbox{in $W^{1,\theta}(\Omega)^*$ as $n\ri\infty$}.\end{equation}

From \eqref{eq7} we have
\begin{equation}\label{eq8}  \begin{split}
\di  \Big|\langle A_p^{a_0}(u_n),h\rangle  +\langle A_q(u_n),h\rangle &\di +\intom \xi(z)|u_n|^{p-2}u_nhdz+\int_{\partial\Omega}\beta(z)|u_n|^{p-2}u_nhd\sigma  \\  &\di   -  \intom f(z,u_n)hdz \Big|\leq\frac{\ep_n\|h\|}{1+\|u_n\|},  \end{split}   \bbb
for all $h\in \wt$, with $\ep_n\ri 0$.

In \eqref{eq8} we choose $h=u_n\in\wt$ and obtain for all $n\in\NN$
\bb\label{eq9} -\intom a_0(z)|Du_n|^pdz-\|Du_n\|^q_q-\intom \xi(z)|u_n|^pdz-\int_{\partial\Omega}\beta(z)|u_n|^pd\sigma +\intom f(z,u_n)u_ndz\leq\ep_n.\bbb
Also, by \eqref{eq6} we have for all $n\in\NN$,
\bb\label{eq10} \intom a_0(z)|Du_n|^pdz+\frac pq\,\|Du_n\|^q_q+\frac{p}{q}\intom \xi(z)|u_n|^pdz+\frac{p}{q}\int_{\partial\Omega}\beta(z)|u_n|^pd\sigma
-\intom pF(z,u_n)dz\leq pc_4.\bbb
We add relations \eqref{eq9} and \eqref{eq10}. Since $q<p$, we obtain
\bb\label{eq11}\intom\eta(z,u_n)dz\leq c_5\ \mbox{for some $c_5>0$ and all $n\in\NN$.}\bbb

\smallskip {\it Claim.} The sequence $\{u_n\}_{n\geq 1}\subseteq\wt$ is bounded.

\smallskip We argue by contradiction. Suppose that the claim is not true. We may assume that
\bb\label{eq12}\|u_n\|\ri\infty\ \mbox{as $n\ri\infty$}.\bbb
We set $y_n=u_n/\|u_n\|$ for all $n\in\NN$. Then $\|y_n\|=1$ and so we may assume that
\bb\label{eq13}y_n \xrightarrow{w} y \ \mbox{in $\wt$ and } y_n\ri y\ \mbox{in}\ L^r(\Omega)\ \mbox{and in}\ L^p(\partial\Omega),\bbb
see hypotheses $H_0$, Proposition \ref{prop1} and Remark \ref{rem1}.

We first assume that $y\not\equiv 0$. Let
$$\Omega_+=\{z\in\Omega:\ y(z)>0\}\ \mbox{and}\ \Omega_-=\{z\in\Omega:\ y(z)<0\}.$$
Then at least one of these measurable sets has positive Lebesgue measure on $\RR^N$. We have
$$u_n(z)\ri+\infty\ \mbox{for a.a. $z\in\Omega_+$ and}\ u_n(z)\ri-\infty\ \mbox{for a.a. $z\in\Omega_-$.}$$

Let $\hat\Omega=\Omega_+\cup\Omega_-$ and let $|\,\cdot\,|_N$ denote the Lebesgue measure on $\RR^N$. We see that $|\hat\Omega|_N>0$ and on account of hypothesis $H_1(ii)$, we have
\bb\label{eq14}\begin{array}{ll}
&\di \frac{F(z,u_n(z))}{\|u_n\|^p}=\frac{F(z,u_n(z))}{|u_n(z)|^p}\,|y_n(z)|^p\ri +\infty\ \mbox{for a.a. $z\in\hat\Omega$,}\\
& \ \\
\Rightarrow &\di \int_{\hat\Omega}\frac{F(z,u_n(z))}{\|u_n\|^p}dz\ri +\infty\ \mbox{by Fatou's lemma}.\end{array}\bbb
Hypotheses $H_1(i),\, (ii)$ imply that
\bb\label{eq15} F(z,x)\geq -c_6\ \mbox{for a.a. $z\in\Omega$, all $x\in\RR$ and some $c_6>0$.}\bbb
Thus we obtain
\bb\label{eq16}
\begin{array}{ll}
\di \intom \frac{F(z,u_n)}{\|u_n\|^p}dz&\di =\int_{\hat\Omega}\frac{F(z,u_n)}{\|u_n\|^p}dz+\int_{\Omega\setminus\hat\Omega}\frac{F(z,u_n)}{\|u_n\|^p}dz\\
&\di\geq \int_{\hat\Omega}\frac{F(z,u_n)}{\|u_n\|^p}dz-\frac{c_6|\Omega|_N}{\|u_n\|^p}\ \mbox{(see \eqref{eq15})},\\
&\di \Rightarrow  \lim_{n\ri\infty}\int_{\Omega}\frac{F(z,u_n)}{\|u_n\|^p}dz=+\infty\ \mbox{(see \eqref{eq14} and \eqref{eq12})}.\end{array}
\bbb
By \eqref{eq6}, we have
\bb\label{eq17}
\intom\frac{pF(z,u_n)}{\|u_n\|^p}dz\leq\gamma_p(y_n)+\frac pq\, \frac{1}{\|u_n\|^{p-q}}\,\|Dy_n\|^q_q+\frac{c_4}{\|u_n\|^p}\leq c_7,\bbb
for some $c_7>0$ and all $n\in\NN$ (see \eqref{eq12} and recall that $\|y_n\|=1$).

We compare relations \eqref{eq14} and \eqref{eq17} and arrive at a contradiction.

\smallskip
Next, we assume that $y=0$. Let $\mu>0$ and set $v_n=(p\mu)^{1/p}y_n$ for all $n\in\NN$. Evidently, we have
\bb\label{eq18}
\begin{array}{ll}
&\di v_n\ri 0\ \mbox{in $L^r(\Omega)$ (see \eqref{eq13})},\\
\Rightarrow &\di \intom F(z,v_n)dz\ri 0\ \mbox{as $n\ri\infty$}.\end{array}\bbb

Consider the functional $\psi:\wt\ri\RR$ defined by
$$\psi(u)=\frac 1p\, \gamma_p(u)-\intom F(z,u)dz\ \mbox{for all}\ u\in\wt.$$

Clearly, $\psi\in C^1(\wt,\RR)$ and
\bb\label{eq19}\psi\leq\varphi.\bbb
We can find $t_n\in [0,1]$ such that
\bb\label{eq20} \psi (t_nu_n)=\min\{\psi (tu_n):\ 0\leq t\leq 1\}\ \mbox{for all}\ n\in\NN.\bbb

Because of \eqref{eq12}, we can find $n_0\in\NN$ such that
\bb\label{eq21} 0<\frac{(p\mu)^{1/p}}{\|u_n\|}\leq 1\ \mbox{for all}\ n\geq n_0.\bbb
Therefore
$$\begin{array}{ll}
\psi(t_nu_n)&\di\geq \psi(v_n)\ \mbox{(see \eqref{eq20}, \eqref{eq21})}\\
&\geq \mu\gamma_p(y_n)-\intom F(z,v_n)dz\\
&\di\geq \mu c_1-\intom F(z,v_n)dz\ \mbox{(see \eqref{eq2} and recall that $\|y_n\|=1$)}\\
&\geq \frac \mu 2\, c_1\ \mbox{for all $n\geq n_1\geq n_0$ (see \eqref{eq18})}.\end{array}$$
Since $\mu>0$ is arbitrary, it follows that
\bb\label{eq22} \psi (t_nu_n)\ri +\infty\ \mbox{as}\ n\ri\infty.\bbb
Note that
\bb\label{eq23}\psi(0)=0\ \mbox{and}\ \psi(u_n)\leq c_4\ \mbox{for all}\ n\in\NN\ \mbox{(see \eqref{eq6}, \eqref{eq19})}.\bbb
By \eqref{eq22} and \eqref{eq23} we can infer that
\bb\label{eq24} t_n\in (0,1)\ \mbox{for all}\ n\geq n_2.\bbb
From \eqref{eq20} and \eqref{eq24}, we can see that for all $n\geq n_2$ we have
\bb\label{eq25}\begin{array}{ll}
0&=\di t_n\frac{d}{dt}\psi(tu_n)|_{t=t_n}\\
& \ \\
&\di =\langle \psi'(t_nu_n),t_nu_n\rangle\ \mbox{(by the chain rule)}\\
&\di =\gamma_p(t_nu_n)-\intom f(z,t_nu_n)(t_nu_n)dz.
\end{array}\bbb
It follows that
$$0\leq t_nu_n^+\leq u_n^+\ \mbox{and}\ -u_n^-\leq -t_nu_n^-\leq 0\ \mbox{for all}\ n\in\NN$$
(recall that $u_n^+=\max\{u_n,0\}$ and $u_n^-=\max\{-u_n,0\}$).

By hypothesis $H_1(iii)$, we have
$$\eta (z,t_nu_n^+)\leq\eta (z,u_n^+)+e(z)\ \mbox{for a.a.}\ z\in \Omega \ \mbox{and all}\ n\in\NN,$$
$$\eta (z,-t_nu_n^-)\leq\eta (z,-u_n^-)+e(z)\ \mbox{for a.a.}\ z\in \Omega \ \mbox{and all}\ n\in\NN.$$
From these two inequalities and since $u_n=u_n^+-u_n^-$, we obtain
\bb\label{eq26}\begin{array}{ll}
&\di\eta (z,t_nu_n)\leq\eta (z,u_n)+e(z)\ \mbox{for a.a.}\ z\in\Omega\ \mbox{and all}\ n\in\NN,\\
\Rightarrow &\di f(z,t_nu_n)(t_nu_n)\leq\eta(z,u_n)+e(z)+pF(z,t_nu_n)\ \mbox{for a.a.}\ z\in\Omega\ \mbox{and all}\ n\in\NN.\end{array}\bbb
We return to \eqref{eq25} and apply \eqref{eq26}. Then
\bb\label{eq27}\begin{array}{ll}
&\di\gamma_p(t_nu_n)-p\intom F(z,t_nu_n)dz\leq\intom \eta (z,u_n)dz+\|e\|_1\ \mbox{for all}\ n\in\NN,\\
\Rightarrow &\di p\psi (t_nu_n)\leq c_8\ \mbox{for some $c_8>0$ and all $n\in\NN$ (see \eqref{eq11}.}\end{array}\bbb
We compare \eqref{eq22} and \eqref{eq27} and arrive at a contradiction.

This proves the claim.

On account of this claim, we may assume that
\bb\label{eq28} u_n\xrightarrow {w} u\ \mbox{in}\ \wt\ \mbox{and}\ u_n\ri u\ \mbox{in}\ L^r(\Omega)\ \mbox{and in}\ L^p(\partial\Omega)\bbb
(see hypotheses $H_0$).

From \eqref{eq28} we have
\bb\label{eq29} Du_n\ri Du\ \mbox{in}\ L^p_{a_0}(\Omega,\RR^N)\quad\mbox{and}\quad
Du_n(z)\rightarrow Du(z)\ \mbox{a.a.}\ z\in\Omega.\bbb
In \eqref{eq8} we choose $h=u_n-u\in\wt$, pass to the limit as $n\ri\infty$ and use \eqref{eq29} and the monotonicity of $A_p(\cdot)^{a_0}$.
We obtain
$$\begin{array}{ll} &\di\limsup_{n\ri\infty}\langle A_p^{a_0}(u_n),u_n-u\rangle \leq 0,\\
\Rightarrow &\di \limsup_{n\ri\infty}\|Du_n\|_{L^p_{a_0}(\Omega,\RR^N)}\leq\|Du\|_{L^p_{a_0}(\Omega,\RR^N)}.\end{array}$$
On the other hand, from \eqref{eq29} we have
$$\liminf_{n\ri\infty}\|Du_n\|_{L^p_{a_0}(\Omega,\RR^N)}\geq\|Du\|_{L^p_{a_0}(\Omega,\RR^N)}.$$
Therefore we conclude that
\bb\label{eq30}\|Du_n\|_{L^p_{a_0}(\Omega,\RR^N)}\ri\|Du\|_{L^p_{a_0}(\Omega,\RR^N)}.\bbb

The space $L^p_{a_0}(\Omega,\RR^N)$ is uniformly convex, hence it has the Kadec-Klee property (see Papageorgiou, R\u adulescu and Repov\v s \cite[Remark 2.7.30, p. 127]{18}). So, it follows from \eqref{eq29} and \eqref{eq30}  that
$$\begin{array}{ll}
&\di Du_n\ri Du \ \mbox{in}\ L^p_{a_0}(\Omega,\RR^N),\\
\Rightarrow &\di Du_n\ri Du\ \mbox{in}\ L^q(\Omega,\RR^N) \ \mbox{since $L^p_{a_0}(\Omega,\RR^N)\hookrightarrow L^q(\Omega,\RR^N)$ continuously},\\
\Rightarrow &\di \rho_\theta (|Du_n-Du|)\ri 0\ \mbox{(see Proposition \ref{prop2})},\\
\Rightarrow &\di \|u_n-u\|\ri 0\ \mbox{(see \eqref{eq28} and Proposition \ref{prop2})},\\
\Rightarrow &\di\mbox{$\varphi$ satisfies the C-condition}.\end{array}$$
The proof is now complete.
\end{proof}

\begin{prop}\label{prop5} If hypotheses $H_0$, $H_1$ hold, then the functional $\varphi(\cdot)$ has a local $(1,1)$-linking at $0$.\end{prop}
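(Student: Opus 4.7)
The plan is to realize the local $(1,1)$-linking of Definition \ref{defi3} by taking the ``downward'' set $E$ along the principal Neumann eigenfunction of $-\Delta_q$ (a constant, since $\hat\lambda_1(q)=0$) and the ``upward'' set $D$ as the codimension-one hyperplane of zero-mean functions. Concretely, I would set $e \equiv 1$, $E = \{te : |t| \leq r_0\}$, $E_0 = \{\pm r_0 e\}$, $D = \{u \in \wt : \intom u\, dz = 0\}$, and $U = \{u \in \wt : \|u\| < \rho\}$, with radii $r_0 < \rho$ to be fixed. The conditions $0 \not\in E_0 \subseteq E$ and $E_0 \cap D = \emptyset$ then hold automatically since $\pm r_0 e$ are nonzero constants.

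To check $\varphi|_E \leq 0$, I would use $De = 0$ to reduce $\gamma_p(te) = c_0|t|^p$ with $c_0 = \intom \xi\, dz + \int_{\partial\Omega}\beta\, d\sigma$ and invoke the lower bound $qF(z,te) \geq \theta(z)|t|^q$ from $H_1(iv)$ to obtain $\varphi(te) \leq c_0|t|^p/p - \|\theta\|_1|t|^q/q$; since $q < p$ and $\|\theta\|_1 > 0$, this is $\leq 0$ for $r_0$ small enough (also $\leq \delta$). To check $\varphi > 0$ on $D \cap U \setminus \{0\}$, I would combine $H_1(i)$ with the upper bound in $H_1(iv)$ to extract a global estimate $F(z,x) \leq \hat\lambda|x|^q/q + C|x|^r$, then couple this with the coercivity $\gamma_p(u) \geq c_1\|u\|^p$ from \eqref{eq2} and the embedding $\wt \hookrightarrow L^r(\Omega)$ from Proposition \ref{prop1} (since $p < r < q^*$):
$$\varphi(u) \geq \frac{c_1}{p}\|u\|^p + \frac{1}{q}\bigl(\|Du\|_q^q - \hat\lambda\|u\|_q^q\bigr) - C'\|u\|^r.$$
On $D$ the variational characterization of $\hat\lambda_2(q)$ for the Neumann $q$-Laplacian (first eigenfunctions being constants) yields $\|Du\|_q^q \geq \hat\lambda_2(q)\|u\|_q^q \geq \hat\lambda\|u\|_q^q$, so the middle bracket is nonnegative; since $r > p$, shrinking $\rho$ makes the $\|u\|^p$-term dominate and gives $\varphi(u) \geq c_1\|u\|^p/(2p) > 0$ for $0 < \|u\| \leq \rho$.

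The homological step is then elementary. $E_0$ is two points, so $H_0(E_0) \cong \FF^2$; $E$ is a segment, so $H_0(E) \cong \FF$; and $\wt \setminus D$ splits into the two convex open half-spaces $\{u : \pm\intom u\, dz > 0\}$, so $H_0(\wt \setminus D) \cong \FF^2$ with $\pm r_0 e$ in distinct components. Hence $i_*$ is an isomorphism and $j_*$ factors through a single class, producing $\dim\,\mathrm{im}\,i_* - \dim\,\mathrm{im}\,j_* = 2 - 1 = 1$. The strict positivity on $D \cap U \setminus \{0\}$ and the nonpositivity on $E$ also force $0$ to be the only critical point of $\varphi$ in $\varphi^0 \cap U$, verifying (a).

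The main obstacle is the Rayleigh-quotient estimate $\|Du\|_q^q \geq \hat\lambda_2(q)\|u\|_q^q$ on the zero-mean hyperplane for the nonlinear $q$-Laplacian, which in the linear case follows from Courant-Fischer but here requires the Ljusternik-Schnirelmann characterization for Neumann boundary condition from \cite{9}. A secondary delicate point is the possibly resonant case $\hat\lambda = \hat\lambda_2(q)$, where the middle bracket may vanish on a nontrivial ray in $D$; fortunately the $\gamma_p$-coercivity supplies a genuine $\|u\|^p$-term that still dominates the $L^r$-remainder as $\|u\| \to 0$, rescuing strict positivity.
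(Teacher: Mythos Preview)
Your construction is essentially the paper's: both take $V=\RR$ (the constants), $E=V\cap\bar B_\rho$, $E_0=V\cap\partial B_\rho$, and both establish $\varphi|_E\le 0$ via the lower bound in $H_1(iv)$ exactly as you do. The difference lies in the choice of $D$. The paper takes $D=\{u\in\wt:\|Du\|_q^q\ge\hat\lambda_2(q)\|u\|_q^q\}$, so that the middle bracket in your displayed lower bound for $\varphi$ is nonnegative on $D$ \emph{by definition}, and the positivity of $\varphi$ on $D\cap\bar B_\rho\setminus\{0\}$ follows at once; the price is that the topology of $\wt\setminus D$ is no longer transparent, and the paper invokes a deformation-retract argument (via a topological complement $Y$ of $V$) to conclude that $i_*$ is an isomorphism. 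Your choice $D=\{u:\int_\Omega u\,dz=0\}$ reverses the trade-off: the topology becomes trivial (two open half-spaces, each containing one point of $E_0$), but you must now \emph{prove} the Rayleigh inequality $\|Du\|_q^q\ge\hat\lambda_2(q)\|u\|_q^q$ for every zero-mean $u$.

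That is a genuine gap. For $q=2$ this is classical Courant--Fischer, but for $q\ne 2$ the Ljusternik--Schnirelmann eigenvalues are min--max values over symmetric sets of prescribed genus, \emph{not} infima over linear hyperplanes; the orthogonal-complement principle underlying Courant--Fischer fails for nonlinear eigenvalue problems, and in general the infimum of the Rayleigh quotient over $\{u:\int_\Omega u\,dz=0\}$ can be strictly smaller than $\hat\lambda_2(q)$. Reference \cite{9} does not supply the inequality you need. Without it your bracket $\|Du\|_q^q-\hat\lambda\|u\|_q^q$ may be negative of order $\|u\|_q^q$, which (since $q<p$) dominates the positive term $c_1\|u\|^p/p$ as $\|u\|\to 0$, and the conclusion $\varphi|_{D\cap U\setminus\{0\}}>0$ collapses. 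A smaller point: condition (a) of Definition~\ref{defi3} does not follow from (c) as your last paragraph asserts---there could be critical points in $U\setminus(D\cup E)$ with $\varphi\le 0$. The paper secures (a) at the outset by assuming $K_\varphi$ is finite (otherwise one already has infinitely many solutions) and then choosing $\rho$ so small that $K_\varphi\cap\bar B_\rho=\{0\}$.
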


\begin{proof}
Since the critical points of $\varphi$ are solutions of problem \eqref{eq1}, we may assume that $K_\varphi$ is finite or otherwise we already have infinitely many nontrivial solutions of \eqref{eq1} and so we are done.

Choose $\rho\in(0,1)$ so small that $K_\varphi\cap\bar{B}_\rho=\{0\}$ (here, $B_\rho=\{u\in\wt:\ \|u\|<\rho\}$).
Let $V=\RR$ and let $\delta>0$ as postulated by hypothesis $H_1(iv)$. Recall that on a finite dimensional normed space all norms are equivalent. So, by taking $\rho\in(0,1)$ even smaller as necessary, we have
\bb\label{eq31}
\|u\|\leq\rho\Rightarrow |u|\leq\delta\ \mbox{for all}\ u\in V=\RR.\bbb
Then for $u\in V\cap\bar{B}_\rho$, we have
$$\begin{array}{ll}
\varphi (u)&\di \leq \frac 1p\,\gamma_p(u)-\frac{|u|^q}{q}\intom \theta(z)dz\ \mbox{(see \eqref{eq31} and Hypothesis $H_1(iv)$)}\\
&\di =\frac{|u|^p}{p}\left(\intom\xi(z)dz+\int_{\partial\Omega}\beta(z)d\sigma\right)-\frac{|u|^q}{q}\intom \theta(z)dz\\
&\di\leq c_9\|u\|^p-c_{10}\|u\|^q\ \mbox{for some $c_9,c_{10}>0$ (see hypotheses $H_0$ and $H_1(iv)$)}.\end{array}$$
Since $q<p$, choosing $\rho\in(0,1)$ small, we conclude that
\bb\label{eq32} \varphi|_{V\cap\bar B_\rho}\leq 0.\bbb

Let $$D=\{u\in\wt:\ \|Du\|^q_q\geq\hat\lambda_2(q)\|u\|^q_q\}.$$
For all $u\in D$ we have
$$\begin{array}{ll}
\di\varphi (u)&\di =\frac 1p\,\gamma_p(u)+\frac 1q\,\|Du\|^q_q-\int_{\{|u|\leq\delta \}}F(z,u)dz-\int_{\{|u|>\delta \}}F(z,u)dz\\
&\di\geq \frac 1p\,\gamma_p(u)+\frac 1q\left(\|Du\|^q_q-\intom \hat\lambda |u|^qdz\right) -\intom F(z,u)dz\\
&\qquad\qquad\qquad \mbox{(see hypotheses $H_1(iv)$)}\\
&\di\geq \frac 1p\,\gamma_p(u)+\frac 1q \intom (\hat\lambda_2(q)-\hat\lambda)|u|^qdz-c_{11}\|u\|^r\\
&\qquad\qquad\qquad \mbox{for some $c_{11}>0$ (since $u\in D$ and see hypotheseis $H_1(iv)$)}\\
&\di\geq\frac{c_{11}}{p}\,\|u\|^p-c_{11}\,\|u\|^r\ \mbox{(see \eqref{eq21})}.
\end{array}$$
Since $p<r$, for small $\rho\in (0,1)$  we have
\bb\label{eq33} \varphi|_{D\cap\bar B_\rho\setminus\{0\}}>0.\bbb

Let $U=\bar B_\rho$, $E_0=V\cap\partial B_\rho$, $E=V\cap\bar B_\rho$ and $D$ as above. We have $0\notin E_0$, $E_0\subseteq E\subseteq U=\bar B_\rho$ and $E_0\cap D=\emptyset$
(see Definition \ref{defi3}).

Let $Y$ be the topological complement of $V$. We have that
$$\wt=V\oplus Y \ \mbox{(see \cite[pp. 73, 74]{18})}.$$
So, every $u\in\wt$ can be written in a unique way as
$$u=v+y\ \mbox{with}\ v\in V,\, y\in Y.$$
We consider the deformation $h:[0,1]\times (\wt\setminus D)\ri \wt\setminus D$ defined by
$$h(t,u)=(1-t)u+t\rho\,\frac{v}{\|v\|}\ \mbox{for all $t\in [0,1]$ , $u\in\wt\setminus D$}.$$

We have
$$h(0,u)=u\ \mbox{and}\ h(1,u)=\rho\,\frac{v}{\|v\|}\in V\cap\partial B_\rho=E_0.$$

It follows that $E_0$ is a deformation retract of $\wt\setminus D$ (see Papageorgiou, R\u adulescu and Repov\v s \cite[Definition 5.3.10, p. 385]{17}). Hence
$$i_*:H_0(E_0)\ri H_0(\wt\setminus\{0\})$$
is an isomorphism (see Eilenberg and Steenrod \cite[Theorem 11.5, p.30]{8} and Papageorgiou, R\u adulescu and Repov\v s \cite[Remark 6.1.6, p. 460]{18}).

The set $E=V\cap B_\rho$ is contractible (it is an interval). Hence $H_0(E,E_0)=0$ (see Eilenberg and Steenrod \cite[Theorem 11.5, p. 30]{8}). Therefore, if $j_*:H_0(E_0)\ri H_0(E)$, then ${\rm dim\,im}\,j_*=1$ (see Papageorgiou, R\u adulescu and Repov\v s \cite[Remark 6.1.26, p. 468]{8}). So, finally we have
$$\begin{array}{ll}
&\di {\rm dim\,im}\, i_*-{\rm dim\,im}\, j_*=2-1=1,\\
\Rightarrow &\di \varphi (\cdot)\ \mbox{has a local $(1,1)$-linking at 0, see Definition \ref{defi3}.}
\end{array}$$
The proof is now complete.
\end{proof}

From Proposition \ref{prop5} and Theorem 6.6.17 of Papageorgiou, R\u adulescu and Repov\v s \cite[p. 538]{18}, we have
\bb\label{eq34} {\rm dim}\, C_1(\varphi, 0)\geq 1.\bbb
Moreover, Proposition 3.9 of Papageorgiou, R\u adulescu and Repov\v s \cite{17} leads to the following result.

\begin{prop}\label{prop6} If hypotheses $H_0,\,H_1$ hold, then $C_k(\varphi,\infty)=0$ for all $k\in\NN_0$.\end{prop}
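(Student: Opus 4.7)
Since $\wt$ is a Banach space it is contractible, so $H_k(\wt)=0$ for $k\geq 1$. From the long exact homology sequence of the pair $(\wt,\varphi^c)$ it suffices, for $c$ sufficiently negative, to show that $\varphi^c$ is contractible (equivalently, that its reduced homology vanishes). The plan is to construct a strong deformation retract of $\wt\setminus\{0\}$ onto the level set $L=\{u:\varphi(u)=c\}$, arranged so that its restriction to $\varphi^c$ retracts $\varphi^c$ onto $L$ as well, and then to invoke that the sphere in an infinite-dimensional Banach space is contractible.

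The main analytic step is an identity combining the homogeneity of $\gamma_p$ and $\|D\cdot\|_q^q$ with the quasimonotonicity condition $H_1(iii)$. A direct computation yields
\[
p\varphi(u) - \langle\varphi'(u),u\rangle = \left(\frac{p}{q}-1\right)\|Du\|_q^q + \intom \eta(z,u)\,dz.
\]
Since $p>q$, the first term is nonnegative, and by reproducing the argument used in the claim inside the proof of Proposition \ref{prop4} (where $\intom \eta(z,u_n)\,dz$ was bounded by means of $H_1(iii)$ and the superlinearity $H_1(ii)$), one shows that $\intom \eta(z,u)\,dz\to+\infty$ along any sequence on which $\varphi(u_n)\leq c_n\ri-\infty$. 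Consequently, there exists $c^{*}<0$ such that
\[
\varphi(u)\leq c^{*}\quad\Longrightarrow\quad\langle\varphi'(u),u\rangle<0.
\]

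Fix $c\leq c^{*}$ with $c<\inf\varphi(K_\varphi)$. For any $u\in\wt\setminus\{0\}$, hypothesis $H_1(ii)$ gives $\varphi(\tau u)\ri-\infty$ as $\tau\ri+\infty$, while $\varphi(0)=0>c$, so by continuity the map $\tau\mapsto\varphi(\tau u)$ attains the value $c$ at some positive $\tau$. At every such $\tau$ the preceding implication yields $\frac{d}{d\tau}\varphi(\tau u)=\tau^{-1}\langle\varphi'(\tau u),\tau u\rangle<0$, so the crossing is strict and unique; writing $T(u)$ for this positive root, the implicit function theorem provides $T\in C(\wt\setminus\{0\},(0,\infty))$, with $T(u)\leq 1$ precisely when $u\in\varphi^c$, and $\varphi(\tau u)<c$ for every $\tau>T(u)$. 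Define the homotopy
\[
h(s,u)=\bigl[(1-s)+s\,T(u)\bigr]\,u,\qquad (s,u)\in[0,1]\times(\wt\setminus\{0\}).
\]
Then $h(0,\cdot)=\mathrm{id}$ and $h(1,\cdot)$ has image in $L$; moreover, for $u\in\varphi^c$ the scalar $(1-s)+sT(u)$ stays in $[T(u),1]$, so $h(s,u)\in\varphi^c$ throughout. Hence $\wt\setminus\{0\}$ strongly deformation retracts onto $L$, and so does $\varphi^c$.

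Because $\wt$ is an infinite-dimensional Banach space, its unit sphere is contractible (classical theorem of Kakutani--Klee), and the radial homeomorphism $u\mapsto T(u)u/\|T(u)u\|$ identifies $L$ with the unit sphere. Thus $L$ is contractible, hence $\varphi^c$ is contractible, and $H_k(\wt,\varphi^c)=0$ for every $k\in\NN_0$. The hardest step of the execution is obtaining the strict inequality $\langle\varphi'(u),u\rangle<0$ on $\varphi^{c^{*}}$: absent the Ambrosetti--Rabinowitz condition, it has to be squeezed out of $H_1(iii)$ together with the growth $F(z,x)/|x|^p\ri+\infty$, exactly in the spirit of the second half of the proof of Proposition \ref{prop4}.
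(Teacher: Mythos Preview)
The paper does not give its own proof of this proposition; it simply invokes Proposition~3.9 of \cite{17}. Your direct argument is the standard one (and is presumably what lies behind that citation): show that for $c$ negative enough the sublevel set $\varphi^c$ deformation retracts onto a level surface which is homeomorphic to the unit sphere of the infinite-dimensional space $\wt$, hence contractible.

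One step in your justification is imprecise. You assert that $\intom\eta(z,u_n)\,dz\to+\infty$ whenever $\varphi(u_n)\to-\infty$, appealing to ``the argument used in the claim inside the proof of Proposition~\ref{prop4}''. But that argument runs in the opposite direction (there \eqref{eq11} bounds $\intom\eta$ from \emph{above}, using both \eqref{eq6} and \eqref{eq7}), and the divergence you claim does not follow from $\varphi(u_n)\to-\infty$ alone. What you actually need, and what is immediate from $H_1(iii)$ by taking $x=0$, is the uniform lower bound $\eta(z,y)\ge-e(z)$ for all $y\in\RR$, hence $\intom\eta(z,u)\,dz\ge-\|e\|_1$. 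Combined with your identity
\[
p\varphi(u)-\langle\varphi'(u),u\rangle=\Bigl(\tfrac{p}{q}-1\Bigr)\|Du\|_q^q+\intom\eta(z,u)\,dz\ \ge\ -\|e\|_1,
\]
this yields $\langle\varphi'(u),u\rangle\le p\varphi(u)+\|e\|_1<0$ whenever $\varphi(u)<-\|e\|_1/p$, so $c^{*}=-\|e\|_1/p$ works. Hypothesis $H_1(ii)$ is not needed for this inequality; it enters only to guarantee $\varphi(\tau u)\to-\infty$ along every ray. With this small correction your argument is complete.
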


We are now ready for the existence theorem concerning the superlinear case.

\begin{theorem}\label{th7} If hypotheses $H_0,\,H_1$ hold, then problem \eqref{eq1} has a nontrivial solution $u_0\in \wt\cap L^\infty(\Omega)$.\end{theorem}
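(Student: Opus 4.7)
The plan is to argue by contradiction via a standard Morse-theoretic comparison of critical groups at the origin and at infinity, exploiting the two inputs already at our disposal: the local $(1,1)$-linking bound \eqref{eq34} that yields $\dim C_1(\varphi,0)\geq 1$, and Proposition \ref{prop6} which gives $C_k(\varphi,\infty)=0$ for all $k\in\NN_0$.

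First I would reduce to the case $K_\varphi=\{0\}$. Indeed, as already observed in the proof of Proposition \ref{prop5}, if $K_\varphi$ is infinite then the theorem is immediate; so one may assume $K_\varphi$ is finite. If furthermore $K_\varphi\setminus\{0\}\neq\emptyset$, then any such critical point of $\varphi$ is, by the variational structure of problem \eqref{eq1} (the Robin boundary condition enters via the boundary integral in $\gamma_p$, interpreted through the nonlinear Green identity of \cite[p.~34]{18}), a nontrivial weak solution, and we are done. So suppose, aiming at a contradiction, that $K_\varphi=\{0\}$.

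Under this assumption, since $\varphi$ satisfies the C-condition (Proposition \ref{prop4}) and has $0$ as its unique critical point, the standard consequence of the second deformation theorem (see \cite[Proposition 6.2.24, p.~492]{18}) gives
\[
C_k(\varphi,\infty)=C_k(\varphi,0)\quad\text{for all}\ k\in\NN_0.
\]
Combining this identity with Proposition \ref{prop6} and with \eqref{eq34} yields
\[
1\leq \dim C_1(\varphi,0)=\dim C_1(\varphi,\infty)=0,
\]
a contradiction. Hence there exists $u_0\in K_\varphi\setminus\{0\}$, and by construction $u_0\in W^{1,\theta}(\Omega)$ is a nontrivial weak solution of problem \eqref{eq1}.

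It remains to show $u_0\in L^\infty(\Omega)$. The main obstacle here is that a global $L^\infty$-bound for Robin double-phase equations is not part of the classical Sobolev embedding apparatus and requires the nonlinear regularity theory for unbalanced-growth problems. I would invoke the Moser-type iteration developed by Colasuonno and Squassina \cite[Section~3.2]{5} (already used in the paper to ensure $\hat u_1(p)\in L^\infty(\Omega)$), applied to the equation satisfied by $u_0$ with right-hand side $f(\cdot,u_0)$ having subcritical growth via hypothesis $H_1(i)$ and $r<q^*$. Together with the Robin condition, whose boundary coefficient $\beta\in C^{0,\alpha}(\partial\Omega)$ is bounded, the Moser iteration yields $u_0\in L^\infty(\Omega)$, completing the proof.
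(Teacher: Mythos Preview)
Your proof is correct and follows essentially the same Morse-theoretic route as the paper: compare $\dim C_1(\varphi,0)\geq 1$ from \eqref{eq34} with $C_k(\varphi,\infty)=0$ from Proposition~\ref{prop6} to force a nontrivial critical point, and then invoke the $L^\infty$ regularity from \cite[Section~3.2]{5}. The only cosmetic difference is that the paper packages the comparison step as a single black-box application of \cite[Proposition~6.2.42, p.~499]{18}, while you spell out its proof explicitly by assuming $K_\varphi=\{0\}$ and using the identity $C_k(\varphi,\infty)=C_k(\varphi,0)$ (via the second deformation theorem) to reach a contradiction.
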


\begin{proof} On account of \eqref{eq34} and Proposition \ref{prop6}, we can apply Proposition 6.2.42 of Papageorgiou, R\u adulescu and Repov\v s \cite[p. 499]{18}. So, we can find $u_0\in\wt$ such that
$$\begin{array}{ll}
&\di u_0\in K_\varphi \setminus\{0\},\\
\Rightarrow &\di u_0\in \wt\cap L^\infty(\Omega)\ \mbox{is a solution of problem \eqref{eq1}, see \cite[Section 3.2]{18}}.\end{array}$$
The proof is now complete.
\end{proof}

\section{Resonant case}
In this section we are concerned with the resonant case ($p$-linear case). Our hypotheses allow resonance at $\pm\infty$ with respect to the principal eigenvalue $\hat\lambda_1(p)>0$.

The new conditions on the reaction $f(z,x)$ are the following.

\smallskip
$H_2$: $f:\Omega\times\RR\ri\RR$ is a Carath\'eodory function such that $f(z,0)=0$ for a.a. $z\in\Omega$ and\\
(i) $|f(z,x)|\leq \hat a(z)(1+|x|^{r-1})$ for a.a. $z\in\Omega$, all $x\in\RR$, with $\hat a\in L^\infty(\Omega)$, $p<r<q^*$;\\
(ii) if $F(z,x)=\int_0^xf(z,s)ds$, then $\lim_{x\ri\pm\infty}pF(z,x)/|x|^p\leq\hat\lambda_1(p)$ uniformly for a.a. $z\in\Omega$;\\
(iii) we have
$$f(z,x)x-pF(z,x)\ri +\infty\ \mbox{uniformly for a.a. $z\in\Omega$, as $x\ri\pm\infty$};$$
(iv) there exist $\delta>0$, $\theta\in L^\infty(\Omega)$ and $\hat\lambda>0$ such that
$$0\leq\theta (z)\ \mbox{for a.a. $z\in\Omega$, $\theta\not\equiv 0$, $\hat\lambda\leq \hat\lambda_2(q)$},$$
$$\theta(z)|x|^q\leq q F(z,x)\leq \hat\lambda |x|^q\ \mbox{for a.a. $z\in\Omega$ and all $|x|\leq\delta$}.$$

\begin{remark}\label{rem4}
Hypothesis $H_2(ii)$ implies that at $\pm\infty$, we can have resonance with respect to the principal eigenvalue of the operator $u\mapsto -{\rm div}\, (a_0(z)|Du|^{p-2}Du)-\Delta_qu$ with Robin boundary condition.
\end{remark}

\begin{prop}\label{prop8}
If hypotheses $H_0,\, H_2$ hold, then the energy functional $\varphi (\cdot)$ is coercive.
\end{prop}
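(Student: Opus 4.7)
The plan is to argue by contradiction. Suppose $\varphi$ is not coercive; then there exists a sequence $\{u_n\}_{n\geq 1}\subseteq\wt$ with $\|u_n\|\ri\infty$ and $\varphi(u_n)\leq M$ for some $M>0$. Set $y_n=u_n/\|u_n\|$, so $\|y_n\|=1$, and pass to a subsequence along which $y_n\xrightarrow{w} y$ in $\wt$ and $y_n\ri y$ in $L^p(\Omega)$ and in $L^p(\partial\Omega)$ (using Proposition \ref{prop1}(c) together with $p<q^*$ built into hypothesis $H_0$, plus the compact trace embedding recorded in Remark \ref{rem1}).

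The first task is to show $y\neq 0$. Hypotheses $H_2(i),(ii)$ supply, for each $\ep>0$, a constant $c_\ep>0$ such that
\[
pF(z,x)\leq(\hat\lambda_1(p)+\ep)|x|^p+c_\ep \quad \text{for a.a. $z\in\Omega$ and all $x\in\RR$.}
\]
Dividing $\varphi(u_n)\leq M$ by $\|u_n\|^p$, dropping the nonnegative $q$-contribution, and passing to the limit yields $\limsup_n \gamma_p(y_n)\leq(\hat\lambda_1(p)+\ep)\|y\|_p^p$. On the other hand, \eqref{eq2} forces $\gamma_p(y_n)\geq c_1>0$. Letting $\ep\downarrow 0$ then requires $\|y\|_p>0$. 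Since $\gamma_p$ is convex, hence weakly lower semicontinuous, we deduce $\gamma_p(y)\leq\hat\lambda_1(p)\|y\|_p^p$, while the variational characterization \eqref{eq4} gives the reverse inequality. Equality combined with the simplicity of $\hat\lambda_1(p)$ pins down $y=\pm c\hat u_1(p)$ for some $c>0$; in particular $|y(z)|>0$ and $|u_n(z)|\ri+\infty$ for a.a. $z\in\Omega$.

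The core technical step, where the nonquadraticity condition $H_2(iii)$ is exploited, is to upgrade it to the uniform divergence $\hat\lambda_1(p)|x|^p-pF(z,x)\ri+\infty$ as $|x|\ri\infty$. For $x>0$ this comes from the identity
\[
\frac{d}{dx}\!\left[\frac{pF(z,x)}{x^p}\right]=\frac{p\,\eta(z,x)}{x^{p+1}}.
\]
Given $K>0$, $H_2(iii)$ yields $M_K>0$ with $\eta(z,s)\geq K$ for $s\geq M_K$; integrating the displayed identity from $x_1\geq M_K$ to $x_2$, then sending $x_2\ri\infty$ and invoking $H_2(ii)$, produces $\hat\lambda_1(p)x_1^p-pF(z,x_1)\geq K$ for all $x_1\geq M_K$. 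The case $x<0$ is symmetric. This is the main obstacle, as it converts a pointwise growth hypothesis into information strong enough to defeat the resonance at $\hat\lambda_1(p)$.

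To finish, $\hat\lambda_1(p)|x|^p-pF(z,x)$ is bounded below for $x$ in compact sets (from $H_2(i)$), so Fatou's lemma combined with $|u_n(z)|\ri\infty$ a.e. forces $\intom[\hat\lambda_1(p)|u_n|^p-pF(z,u_n)]dz\ri+\infty$. Writing
\[
p\varphi(u_n)=[\gamma_p(u_n)-\hat\lambda_1(p)\|u_n\|_p^p]+\frac{p}{q}\|Du_n\|_q^q+\intom[\hat\lambda_1(p)|u_n|^p-pF(z,u_n)]dz
\]
and discarding the first two nonnegative summands (the first by \eqref{eq4}), we get $p\varphi(u_n)\ri+\infty$, contradicting $\varphi(u_n)\leq M$ and proving that $\varphi$ is coercive.
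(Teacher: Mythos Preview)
Your proof is correct and follows essentially the same strategy as the paper's: a contradiction argument via the normalized sequence $y_n=u_n/\|u_n\|$, identification of the weak limit with a multiple of $\hat u_1(p)$ through \eqref{eq4}, the integration of $\frac{d}{dx}\bigl(F(z,x)/|x|^p\bigr)$ combined with $H_2(ii),(iii)$ to obtain the key lower bound $\hat\lambda_1(p)|x|^p-pF(z,x)\to+\infty$, and a Fatou argument for the contradiction. Your treatment is in fact slightly tidier on two points: you exclude $y=0$ directly from \eqref{eq2} (avoiding the paper's detour through a Kadec--Klee argument), and your integration step produces the inequality with $\hat\lambda_1(p)$ rather than $\hat\lambda_1(p)+\ep$, which streamlines the final comparison.
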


\begin{proof}
We have
$$\begin{array}{ll}
\di \frac{d}{dx}\left(\frac{F(z,x)}{|x|^p}\right)&\di=\frac{f(z,x)|x|^p-p|x|^{p-2}xF(z,x)}{|x|^{2p}}\\
&\di =\frac{|x|^{p-2}x[f(z,x)x-pF(z,x)]}{|x|^{2p}}\,.\end{array}$$
On account of hypothesis $H_2(iii)$, given any $\gamma>0$, we can find $M_1=M_1(\gamma)>0$ such that
$$f(z,x)x-pF(z,x)\geq\gamma\ \mbox{for a.a. $z\in\Omega$ and all $|x|\geq M_1$}.$$
Hence we obtain
$$\frac{d}{dx}\left(\frac{F(z,x)}{|x|^p}\right)\left\{\begin{array}{lll}
&\di\geq\frac{\gamma}{x^{p+1}}&\quad \mbox{if}\ x\geq M_1\\
&\di\leq -\frac{\gamma}{|x|^{p+1}}&\quad\mbox{if}\ x\leq-M_1.\end{array}\right.
$$
Integrating, we obtain
\bb\label{eq35}\frac{F(z,x)}{|x|^p}-\frac{F(z,x)}{|u|^p}\geq-\frac{\gamma}{p}\left(\frac{1}{|x|^p}-\frac{1}{|u|^p}\right)\ \mbox{ for a.a. $z\in\Omega$ and all $|x|\geq |u|\geq M_1$.}\bbb

On account of hypothesis $H_2(ii)$, given $\ep>0$, we can find $M_2=M_2(\ep)>0$ such that
$$F(z,x)\leq\frac 1p\,(\hat\lambda _1(p)+\ep)|x|^p\ \mbox{for a.a. $z\in\Omega$ and all $|x|\geq M_2$.}$$
Using this inequality in \eqref{eq35} and letting $|x|\ri\infty$ we obtain
\bb\label{eq36}
\begin{array}{ll}
&\di \frac 1p\,(\hat\lambda_1(p)+\ep)-\frac{F(z,u)}{|u|^p}\geq\frac\gamma p\,\frac{1}{|u|^p}\ \mbox{for a.a. $z\in\Omega$ and all $|u|\geq M=\max\{M_1,M_2\}$},\\
\Rightarrow &\di (\hat\lambda_1(p)+\ep)|u|^p-pF(z,u)\geq\gamma\ \mbox{for a.a. $z\in\Omega$ and all $|u|\geq M$.}\end{array}\bbb

Arguing by contradiction, suppose that $\varphi(\cdot)$ is not coercive. Then we can find $\{u_n\}_{n\geq 1}\subseteq\wt$ such that
\bb\label{eq37} \|u_n\|\ri\infty\ \mbox{and}\ \varphi (u_n)\leq M_0\ \mbox{for some $M_0>0$ and all $n\in\NN$.}\bbb

Let $y_n=u_n/\|u_n\|$ for all $n\in\NN$. Then $\|y_n\|=1$, hence we may assume that
\bb\label{eq38} y_n\xrightarrow{w} y\ \mbox{in}\ \wt\ \mbox{and} \ y_n\ri y\ \mbox{in $L^p(\Omega)$ and in $L^p(\partial\Omega)$.}\bbb
From \eqref{eq37} we have
$$\begin{array}{ll}
&\di\frac 1p\,\gamma_p(y_n)+\frac 1q\, \frac{1}{\|u_n\|^{p-q}}\intom |Dy_n|^qdz-\intom \frac{F(z,u_n)}{\|u_n\|^p}dz\leq\frac{M_0}{\|u_n\|^p},\\
\Rightarrow &\di \gamma_p(y_n)+\frac pq\, \frac{1}{\|u_n\|^{p-q}}\intom |Dy_n|^qdz\leq \tau_n+(\hat\lambda_1(p)+\ep)\,\|y_n\|^p_p\ \mbox{with $\tau_n\ri 0$, see \eqref{eq36}},\\
\Rightarrow &\di \gamma_p(y)\leq(\hat\lambda_1(p)+\ep)\, \|y\|^p\ \mbox{(see \eqref{eq38})},\\
\Rightarrow &\di \gamma_p(y)\leq\hat\lambda_1(p)\|y\|^p_p\ \mbox{(since $\ep>0$ is arbitrary)},\\
\Rightarrow &\di y=\mu\hat u_1(p)\ \mbox{for some $\mu\in\RR$ (see \eqref{eq4})}.\end{array}$$

If $\mu=0$, then $y=0$ and so $\gamma_p(y_n)\ri0$. Hence, as in the proof of Proposition \ref{prop4}, we have $y_n\ri 0$ in $\wt$, contradicting the fact that $\|y_n\|=1$ for all $n\in\NN$.

So, $\mu\not=0$ and since $\hat u_1(p)(z)>0$ for a.a. $z\in\Omega$, we have $|u_n(z)|\ri+\infty$ for a.a. $z\in\Omega$. By \eqref{eq37} and \eqref{eq4} we have
\bb\label{eq39} \intom \left[ \frac 1p\,\hat\lambda_1(p)|u_n|^p-F(z,u_n)\right]dz\leq M_0\ \mbox{for all}\ n\in\NN.\bbb
However, from \eqref{eq36} and since $\gamma>0$ is arbitrary, we can infer that
\bb\label{eq40} \begin{array}{ll}
&\di\frac 1p\,\hat\lambda_1(p)|u_n|^p-F(z,u_n)\ri+\infty\ \mbox{for a.a. $z\in\Omega$, as $n\ri\infty$},\\
\Rightarrow &\di \intom\left[\frac 1p\, \hat\lambda_1(p)|u_n|^p-F(z,u_n)\right]dz\ri+\infty\ \mbox{by Fatou's lemma}.\end{array}\bbb

Comparing \eqref{eq39} and \eqref{eq40} we arrive at a contradiction. Therefore we can conclude that $\varphi(\cdot)$ is coercive.
\end{proof}

Using Proposition \ref{prop8} and Proposition 5.1.15 of Papageorgiou, R\u adulescu and Repov\v s \cite[p. 369]{18}, we obtain the following result.

\begin{corollary}\label{cor9} If hypotheses $H_0,\,H_2$ hold, then the energy functional $\varphi(\cdot)$ is bounded below and satisfies the C-condition.
\end{corollary}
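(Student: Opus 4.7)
The plan is to deduce both assertions directly from the coercivity of $\varphi$ established in Proposition \ref{prop8}, essentially repackaging the concluding portion of the proof of Proposition \ref{prop4}. First I would observe that $\varphi$ is sequentially weakly lower semicontinuous on $\wt$: the term $\frac{1}{p}\gamma_p(u)+\frac{1}{q}\|Du\|_q^q$ is convex and continuous (its volume integrals have convex integrands, while the potential and boundary contributions are continuous with respect to the strong convergences supplied by compact embeddings), hence weakly lower semicontinuous, while $u\mapsto\intom F(z,u)dz$ is sequentially weakly continuous because the growth condition $H_2(i)$ together with the compact embedding $\wt\hookrightarrow L^r(\Omega)$ from Proposition \ref{prop1}(c) allows passage to the limit under the integral sign. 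Weak lower semicontinuity plus coercivity in the reflexive space $\wt$ then yields, via the direct method, a global minimizer, so $\inf_{\wt}\varphi>-\infty$ and $\varphi$ is bounded below.

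For the C-condition, let $\{u_n\}_{n\geq 1}\subseteq\wt$ satisfy $|\varphi(u_n)|\leq c$ and $(1+\|u_n\|)\varphi'(u_n)\ri 0$ in $\wt^*$. Coercivity forces $\{u_n\}$ to be bounded in $\wt$, and then reflexivity together with Proposition \ref{prop1} and Remark \ref{rem1} allows the extraction of a subsequence with $u_n\xrightarrow{w}u$ in $\wt$, $u_n\ri u$ in $L^r(\Omega)$, and $u_n\ri u$ in $L^p(\partial\Omega)$. From this point I would replicate the concluding argument of the proof of Proposition \ref{prop4}: test the Cerami derivative against $h=u_n-u$, use the monotonicity of $A_p^{a_0}$ and the convergence of the reaction, potential, and boundary contributions (justified by the compact embeddings above and $H_2(i)$), to obtain $\limsup_{n\ri\infty}\langle A_p^{a_0}(u_n),u_n-u\rangle\leq 0$. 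This in turn produces $\|Du_n\|_{L^p_{a_0}(\Omega,\RR^N)}\ri\|Du\|_{L^p_{a_0}(\Omega,\RR^N)}$, and the Kadec--Klee property of the uniformly convex space $L^p_{a_0}(\Omega,\RR^N)$ upgrades the weak gradient convergence to strong convergence; the continuous embedding $L^p_{a_0}(\Omega,\RR^N)\hookrightarrow L^q(\Omega,\RR^N)$ transfers this to $L^q$, and Proposition \ref{prop2} finally converts modular convergence into norm convergence $\|u_n-u\|\ri 0$ in $\wt$.

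The main obstacle is notational rather than substantive: the argument recycles machinery already developed in Section 3, so what really needs verification is that (i) the weak continuity of $u\mapsto\intom F(z,u)dz$ demanded by the direct method is indeed granted by $H_2(i)$ and the compact embedding into $L^r(\Omega)$, and (ii) the $S_+$-type chain of implications at the end of Proposition \ref{prop4} used only a bounded Cerami sequence, a property now supplied by coercivity rather than by the quasimonotonicity hypothesis $H_1(iii)$. No genuinely new analysis is required, which is precisely why the statement appears as a corollary to Proposition \ref{prop8}.
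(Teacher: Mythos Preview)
Your proposal is correct and follows the same route as the paper: the paper deduces the corollary in one line from Proposition~\ref{prop8} together with Proposition~5.1.15 of \cite{18}, and what you have written is precisely an unpacking of that citation---coercivity forces boundedness from below (via the direct method) and bounds any Cerami sequence, after which the $(S)_+$-type tail argument already carried out at the end of Proposition~\ref{prop4} yields strong convergence.
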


Now we are ready for the multiplicity theorem in the resonant case.

\begin{theorem}\label{th10}
If hypotheses $H_0,\,H_2$ hold, then problem \eqref{eq1} has at least two nontrivial solutions $u_0,\,\hat u\in\wt\cap L^\infty(\Omega)$.
\end{theorem}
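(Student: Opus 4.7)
The plan is a two-step scheme: minimization yields a first nontrivial solution $u_0$, and a Morse-theoretic comparison of critical groups then forces a second one, $\hat u$.

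\textbf{First solution.} Corollary~\ref{cor9} says $\varphi$ is bounded below and satisfies the C-condition, so by the standard Weierstrass argument (e.g.\ Proposition 5.1.15 of \cite{18}) there exists $u_0\in\wt\cap K_\varphi$ with $\varphi(u_0)=\inf_{\wt}\varphi$. To see $u_0\neq 0$, I test $\varphi$ on a small positive constant $s\in(0,\delta)$: using the lower bound $qF(z,s)\geq\theta(z)s^q$ from $H_2(iv)$,
$$\varphi(s)\leq\frac{s^p}{p}\left(\intom\xi\,dz+\int_{\partial\Omega}\beta\,d\sigma\right)-\frac{s^q}{q}\intom\theta(z)\,dz,$$
which is strictly negative for all sufficiently small $s>0$ because $q<p$ and $\theta\not\equiv 0$. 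Hence $\varphi(u_0)<0=\varphi(0)$, so $u_0\neq 0$; regularity (as invoked in the proof of Theorem~\ref{th7}) gives $u_0\in L^\infty(\Omega)$.

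\textbf{Critical groups.} Being a global minimizer, $C_k(\varphi,u_0)=\delta_{k,0}\FF$ for every $k\in\NN_0$. Since $\varphi$ is coercive and bounded below, any $c<\inf_{\wt}\varphi$ satisfies $\varphi^c=\emptyset$, so $C_k(\varphi,\infty)=H_k(\wt,\emptyset)=\delta_{k,0}\FF$ by contractibility of the Banach space $\wt$. Finally, hypotheses $H_2(i)$ and $H_2(iv)$ coincide with $H_1(i)$ and $H_1(iv)$, and these are the only conditions on $f$ used in the proof of Proposition~\ref{prop5}; hence $\varphi$ again has a local $(1,1)$-linking at $0$, and \eqref{eq34} yields $\dim C_1(\varphi,0)\geq 1$.

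\textbf{Multiplicity via Morse.} I argue by contradiction: assume $K_\varphi=\{0,u_0\}$. The Morse identity
$$\sum_{u\in K_\varphi}M(t,u)=P(t,\infty)+(1+t)Q(t),$$
with $Q$ a polynomial having non-negative integer coefficients, reduces (using $M(t,u_0)=1=P(t,\infty)$) to $M(t,0)=(1+t)Q(t)$, where $M(t,0)=\sum_k\dim C_k(\varphi,0)\,t^k$. Since $\varphi(s)<0$ for all small $s>0$, the origin is not a local minimum and $C_0(\varphi,0)=0$. The plan is to use the explicit geometry of the $(1,1)$-linking in Proposition~\ref{prop5} (the one-dimensionality of $V=\RR$ and the deformation retract $\wt\setminus D\to E_0=V\cap\partial B_\rho$) to identify the pair $(\varphi^0\cap B_\rho,\,\varphi^0\cap B_\rho\setminus\{0\})$ up to homotopy with $(E,E_0)\simeq([-1,1],\{-1,1\})$, whose relative singular homology is $\FF$ in degree one and zero elsewhere. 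This yields $C_k(\varphi,0)=\delta_{k,1}\FF$, so $M(t,0)=t$. But $t=(1+t)Q(t)$ admits no $Q$ with non-negative integer coefficients, giving the desired contradiction. Hence $K_\varphi$ contains a third element $\hat u\neq 0,u_0$, and regularity gives $\hat u\in L^\infty(\Omega)$.

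\textbf{Main obstacle.} The crux is the sharp computation $C_k(\varphi,0)=\delta_{k,1}\FF$: the inequality $\dim C_1(\varphi,0)\geq 1$ is immediate from Theorem 6.6.17 of \cite{18}, and $C_0(\varphi,0)=0$ follows from non-minimality of $0$, but the vanishing of $C_k(\varphi,0)$ for $k\geq 2$ is the technical point and requires a pseudo-gradient deformation of $\varphi^0\cap B_\rho$ onto the one-dimensional set $E=V\cap\bar B_\rho$ that fixes the origin and is compatible with the splitting $\wt=V\oplus Y$ used in Proposition~\ref{prop5}; equivalently, one needs the strong Morse inequality $m_2-m_1+m_0\geq b_2-b_1+b_0=1$ to fail for the assumed critical set $\{0,u_0\}$, which amounts to the same vanishing claim.
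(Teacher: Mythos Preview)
Your first-solution step matches the paper's: coercivity plus weak lower semicontinuity yield a global minimizer $u_0$, and testing on small constants in $V=\RR$ shows $\varphi(u_0)<0=\varphi(0)$. For the second solution, the paper does not run the Morse relation by hand; it simply invokes Corollary~6.7.10 of \cite{18}, an abstract three-critical-points result that combines the local $(1,1)$-linking at the origin (Proposition~\ref{prop5}, which as you correctly note uses only hypotheses $H_1(i),(iv)=H_2(i),(iv)$) with the bounded-below, C-condition setting of Corollary~\ref{cor9} to produce $\hat u\in K_\varphi\setminus\{0,u_0\}$.

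Your attempt to unpack this via the Morse identity is sound in outline but has a genuine gap, exactly the one you flag: the vanishing of $C_k(\varphi,0)$ for $k\geq 2$. Without it, the information $C_0(\varphi,0)=0$ and $\dim C_1(\varphi,0)\geq 1$ is \emph{compatible} with the Morse identity $M(t,0)=(1+t)Q(t)$; for instance $M(t,0)=t+t^2=(1+t)\cdot t$ satisfies it with $Q(t)=t\geq 0$, so no contradiction follows from the data you have actually established. Your proposed fix---deforming $(\varphi^0\cap B_\rho,\,\varphi^0\cap B_\rho\setminus\{0\})$ onto $(E,E_0)\simeq([-1,1],\{-1,1\})$ via a pseudo-gradient flow respecting the splitting $\wt=V\oplus Y$---is not justified by what is proved in Proposition~\ref{prop5}: the local-linking construction controls $\varphi$ only on $E$ and on $D\cap\bar B_\rho$, not on the full sublevel set $\varphi^0\cap B_\rho$, and there is no a priori reason this sublevel set should retract onto the one-dimensional segment $E$. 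In short, the local $(1,1)$-linking delivers a lower bound on $\dim C_1(\varphi,0)$ (Theorem~6.6.17 of \cite{18}), not a full computation of $C_*(\varphi,0)$; the paper's citation of Corollary~6.7.10 is precisely what circumvents this difficulty.
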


\begin{proof}
By Proposition \ref{prop5} we know that $\varphi(\cdot)$ has a local $(1,1)$-linking at the origin. Note that for that result mattered only the behavior of $f(z,\cdot)$ near zero and this is common in hypotheses $H_1$ and $H_2$. Also, we know that $\varphi(\cdot)$ is sequentially weakly lower semicontinuous. This fact in conjunction with Proposition \ref{prop8}, permit the use of the Weierstrass-Tonelli theorem. So, we can find $u_0\in\wt$ such that
\bb\label{eq41}\varphi(u_0)=\min\{\varphi(u):\ u\in\wt\}.\bbb
On account of hypothesis $H_2(iv)$ and since $q<p$,  we have
$$\begin{array}{ll}
&\di \varphi (u_0)<0=\varphi(0),\\
\Rightarrow &\di u_0\not=0\ \mbox{and}\ u_0\in K_\varphi,\\
\Rightarrow &\di u_0\in K_\varphi\cap L^\infty (\Omega)\ \mbox{is a nontrivial solution of \eqref{eq1}}.
\end{array}$$
Moreover, by Corollary 6.7.10 of Papageorgiou, R\u adulescu and Repov\v s \cite[p. 552]{18}, we can find $\hat u\in K_\varphi$, $\hat u\not\in\{0,u_0\}$. Then $\hat u\in\wt\cap L^\infty(\Omega)$ is the second nontrivial solution of problem \eqref{eq1}.
\end{proof}

\medskip
{\bf Acknowledgments.} The authors wish to thank a knowledgeable referee for  corrections and remarks. This research was supported by the Slovenian Research Agency grants
P1-0292, J1-8131, N1-0114, N1-0064, and N1-0083.

\end{document}